\newtheorem{theorem}{Theorem}[section]
\newtheorem{conjecture}[theorem]{Conjecture}
\newtheorem{remark}[theorem]{Remark}
\DeclareMathOperator{\Tr}{Tr}
\DeclareSymbolFont{cyrletters}{OT2}{wncyr}{m}{n}
\DeclareMathSymbol{\Sha}{\mathalpha}{cyrletters}{"58}
\begin{document}

\title{Bounds for the Hilbert Transform with Matrix $A_2$ Weights}
\date{\today}

\author[K. Bickel]{Kelly Bickel$^{\dagger}$}
\address{Kelly Bickel, Department of Mathematics\\
Bucknell University\\
701 Moore Ave\
Lewisburg, PA 17837}
\email{kelly.bickel@bucknell.edu}
\thanks{$\dagger$ Research supported in part by National Science Foundation grants
DMS  \# 0955432 and DMS \#1448846.}

\author[S. Petermichl]{Stefanie Petermichl$^{\star}$}
\address{Stefanie Petermichl, Universit\'e Paul Sabatier\\ Institut de Math\'ematiques de Toulouse\\
118 route de Narbonne\\ F-31062 Toulouse Cedex 9, France }
\email{stefanie.petermichl@math.univ-toulouse.fr}
\thanks{$\star$ Research supported in part by ANR-12-BS01-0013-02. The author is a member of IUF}

\author[B. D. Wick]{Brett D. Wick$^\ddagger$}
\address{Brett D. Wick, Department of Mathematics\\
Washington University in St. Louis\\
One Brookings Drive\\
 St. Louis, MO 63130-4899 }
\email{wick@math.wustl.edu}
\urladdr{http://www.math.wustl.edu/~wick/}
\thanks{$\ddagger$ Research supported in part by National Science Foundation
DMS grant \# 0955432 and \#1500509.}

\keywords{Matrix $A_2$ weights; Weighted $L^2$ spaces; Hilbert transform; Haar multipliers; Square function; Carleson embedding theorem}

\maketitle

\begin{abstract}
Let $W$ denote a matrix $A_2$ weight.  In this paper, we implement a scalar argument using the square function to deduce related bounds for vector-valued functions in $L^2(W).$  These results are then used to study the boundedness of the Hilbert transform and Haar multipliers on $L^2(W)$.  Our proof shortens the original argument by Treil and Volberg and improves the dependence on the $A_2$ characteristic. In particular, we prove that:
\begin{eqnarray*}
\| T\|_{L^2(W) \rightarrow L^2(W)} & \lesssim & [W]_{A_2}^{\frac{3}{2}} \textnormal{log}\, [W]_{A_2},
\end{eqnarray*}
where $T$ is either the Hilbert transform or a Haar multiplier.
\end{abstract}

\bibliographystyle{plain}

\section{Introduction}
\subsection{Scalar Setting} In this paper, we study the behavior of the Hilbert transform
\[ Hf(x) \equiv p.v. \int_{\mathbb{R}} \frac{f(y)}{x-y} dy \]
on matrix-weighted $L^2$ spaces. To set the scene, recall that in the scalar setting, the
Hunt-Muckenhoupt-Wheeden theorem says that for $1<p<\infty$, the Hilbert transform $H$ is bounded on the weighted space $L^p(w)$ if and and only if 
$w$ is in the $A_p$ Muckenhoupt class, namely, iff
\begin{equation} \label{eqn:ap} \left[ w \right]_{A_p} \equiv \sup_I \left \langle w \right \rangle_I \big \langle w^{-\tfrac{p'}{p}} \big \rangle_I^{\frac{p}{p'}} < \infty, \end{equation}
where the supremum is taken over all interals $I$, $\left \langle w \right \rangle_I$ denotes the average $ \frac{1}{|I|} \int w(x) \ dx$, and $\frac{1}{p}+\frac{1}{p'}=1.$ More generally, a Calder\'on-Zygmund operator $T$ is bounded on $L^p(w)$ as long as $w \in A_p$, for $1 < p < \infty.$ A subtle, related question that became important to the harmonic analysis community over  the past decade is:
\begin{center} What is the the dependence of $\|T \|_{L^p(w) \rightarrow L^p(w)}$ on $[w]_{A_p}?$ \end{center}
For $p=2$, the conjectured dependence was linear, and the problem was termed the $A_2$ Conjecture. This was resolved by Wittwer for the Haar multipliers \cite{wit00}, by Petermichl-Volberg for the Beurling transform \cite{pv02}, and by  Petermichl for the Hilbert transform \cite{pet07}. The case of general dyadic shifts was handled first by Petermichl implicitly in \cite{p08} and then by Lacey-Petermichl-Reguera \cite{lpr10}, using different arguments. Finally, in \cite{th12}, Hyt\"onen proved the  conjecture for general Calder\'on-Zygmund operators.  Using the sharp form of Rubio de Francia's extrapolation theorem due to Dragi\u{c}evi\'c-Grafakos-Pereyra-Petermichl \cite{dgpp05}, one can use the linear $L^2(w)$ bound to immediately obtain the sharp result
\[ \| T\|_{L^p(w) \rightarrow L^p(w)} \lesssim [w]_{A_p}^{ \max \{1, \frac{1}{p-1} \}} \qquad 1< p < \infty,\]
where the implied constant depends only on $T$, not the weight $w$. 

\subsection{Vector Setting} We are interested in generalizations of this theory to vector-valued functions.  Write $L^2\equiv L^2(\mathbb{R}, \mathbb{C}^d)$, namely those vector-valued functions satisfying
\[
\left\Vert f\right\Vert_{L^2}^2\equiv\int_{\mathbb{R}} \Vert f(x)\Vert^2_{\mathbb{C}^d}\,dx<\infty.
\]
We say a $d\times d$ matrix-valued function $W$ is a \emph{matrix weight} if $W$ has locally integrable entries and $W(x)$ is positive semidefinite for a.e.~$x$. Then one can define
$L^2(W)\equiv L^2(\mathbb{R},W, \mathbb{C}^d)$  to be the set of vector-valued functions satisfying
\begin{equation} \label{eqn:L2W}
\left\Vert f\right\Vert_{L^2(W)}^2\equiv\int_{\mathbb{R}} \Vert W^{\frac{1}{2}}(x)f(x)\Vert^2_{\mathbb{C}^d}\,dx=\int_{\mathbb{R}} \left\langle W(x)f(x), f(x)\right\rangle_{\mathbb{C}^d}\,dx<\infty.\end{equation}
As defined by Treil-Volberg \cite{vt97}, we say a weight $W$ satisfies the matrix $A_2$ Muckenhoupt condition if 
\begin{equation} \label{eqn:wa2}
\big [ W \big ] _{A_2} \equiv \sup_{I} \left \| \langle W \rangle_I^{\frac{1}{2}}
 \langle W^{-1} \rangle_I^{\frac{1}{2}} \right \|^2 < \infty,
\end{equation}
where $\left\Vert\cdot\right\Vert$ denotes the norm of the matrix acting on $\mathbb{C}^d$. One can also define $L^p(W)$ and the $A_p$ Muckenhoupt weights. However, for $p\ne 2$, the $A_p$ classes do not have as simple a definition as in \eqref{eqn:ap}. Arguably the simplest characterization appears in \cite{s03}, where Roudenko showed that $W \in A_p$ if and only if  
\[ \sup_I \frac{1}{|I|} \int_I \left( \frac{1}{|I|}\int_I \left \| W(x)^{\frac{1}{p}} W(t)^{-\frac{1}{p}} \right \|^{p'} dt \right)^{\frac{p}{p'}} dx < \infty.\]
For additional details and characterizations of $A_p$ weights, we refer the readers to \cite{gol03, lt07, nr15, s03, vol97}. 

Treil-Volberg chose to characterize $A_2$ weights as ones satisfying \eqref{eqn:wa2} because in \cite{vt97}, they proved:~the Hilbert transform is bounded on $L^2(W)$ if and only if $W$ satisfies \eqref{eqn:wa2}. They gave an alternate proof in \cite{vt97b}.  In \cite{nt97, vol97}, Nazarov-Treil and Volberg separately generalized this result to $A_p$ weights. They both also showed that a classical Calder\'on-Zygmund operator  is bounded on $L^p(W)$ if $W$ is in $A_p$. Here, ``classical'' means that the operator is defined by applying a scalar Calder\'on-Zgymund operator $T$ to each component of a vector-valued function and further, $T$ satisfies $T1=T^*1 =0$. In \cite{cg01, gol03}, Christ-Goldberg and Goldberg studied a class of weighted, vector analogues of the Hardy-Littlewood maximal function  and used them to establish the boundedness of a class of singular integral operators on $L^p(W),$ for $W \in A_p.$  

A host of related interesting problems have also been examined in the matrix setting.  For instance, in \cite{bw15b, k1, k2}, Bickel-Wick and Kerr established $T(1)$ theorems characterizing the boundedness of operators, including the Hilbert transform, between matrix weighted spaces. Meanwhile, in \cite{IKP}, Isralowitz-Kwon-Pott studied the boundedness of commutators of the form $[T,B]$ on $L^p(W),$ where $T$ is a Riesz transform and $B$ a locally integrable matrix function.  In \cite{gol02, k97, ntv97,nptv02}, Nazarov-Treil-Volberg, Katz, Goldberg, and Nazarov-Pisier-Treil-Volberg studied the dependence of the unweighted Carleson Embedding Theorem in the matrix setting on dimension $d$ and concluded that its sharp dependence is $\log d$. Similarly, in \cite{nptv02, pet00}  Nazarov-Pisier-Treil-Volberg and Petermichl studied vector-valued Hankel operators, again concluding that the operator norm's sharp dependence on dimension is $\log d.$ 

In the operator weighted setting, less is known. Gillespie-Pott-Treil-Volberg studied the Haar multipliers and Hilbert transform on weighted spaces in \cite{gptv01, gptv04}. They showed  $W$  satisfying \eqref{eqn:wa2} no longer implies that the Hilbert transform or Haar multipliers are bounded on $L^2(W).$  In \cite{pp03}, Petermichl-Pott proved a form of  Burkholder's Theorem, connecting the boundedness of the Haar multipliers with that of the Hilbert transform on operator weighted $L^2$ spaces. In \cite{kp97,p07}, Pott and Katz-Pereyra both provided interesting sufficient conditions for the Hilbert transform to be bounded on operator weighted $L^2,$ but to the best of the authors'  knowledge, necessary and sufficient  conditions have proved elusive. 

\subsection{Matrix $A_2$ Conjecture} In this paper, we are interested in the natural sharpness question. For $W$  an $A_2$ matrix weight and $T$ a Calder\'on-Zygmund operator,
\begin{center} How does $\| T \|_{L^2(W) \rightarrow L^2(W)}$ depend on $[W]_{A_2}$? \end{center}
In analogy with the scalar setting, we conjecture that the dependence is linear. However, very little is actually known about the answer, and the sharp bounds for even ``simple'' operators like the Hilbert transform and Haar multipliers are unknown. Currently, the best known results concern sparse operators. In  \cite{bw15, IKP}, Bickel-Wick and Isralowitz-Kwon-Pott separately established that if $\mathscr{S}$ is a sparse operator, then
\[ \| \mathscr{S} \|_{L^2(W) \rightarrow L2(W)} \lesssim [W]_{A_2}^{\frac{3}{2}}. \]
Similarly, in \cite{IKP}, Isralowitz-Kwon-Pott studied Christ-Goldberg's maximal function and showed that on $L^2(W)$, its norm depends linearly on $[W]_{A_2}.$

In this vector setting, progress is slow because many tools in the scalar case do not exist or generalize poorly to the matrix setting. For example, Petermichl used both a bilinear embedding theorem and Bellman function testing conditions to show that the scalar Hilbert transform's norm depends linearly on $[w]_{A_2}$ \cite{pet07}. In the matrix weighted setting, there is no known sharp bilinear embedding theorem and Bellman function arguments, while possible, are much more difficult to execute. Indeed, many arguments fail because simple scalar facts like $0< w<v$ implies $w^2 < v^2$ do not hold for matrices.

In this paper, we show that with care, some important scalar arguments  can be modified to work in the matrix setting. Specifically, we consider an elegant proof of Petermichl-Pott from \cite{petpot02}, establishing
the boundedness of the Hilbert transform on $L^2(w)$ with dependence $[w]_{A_2}^{\frac{3}{2}}.$ By modifying this argument appropriately, we prove that 
\begin{equation} \label{eqn:htbound}
\| H  \|_{L^2(W) \rightarrow L^2(W)} \lesssim [W]_{A_2}^{\frac{3}{2}} \log  [W]_{A_2} 
\end{equation}
and obtain similar results for Haar multipliers. Although these constants do not appear to be sharp, they are better than anything that has previously appeared in the literature. While it seems unlikely that the additional  $ \log  [W]_{A_2}$ is  required, removing it will certainly require nontrivial new ideas. We also mention that our results do not extend immediately to $L^p(W)$, as there are complications related to extrapolation in the matrix setting.

\subsection{Outline of Paper} In Section \ref{sec:defn}, we introduce the basic notation and tools used in the proofs of the main results. These tools include sets of disbalanced Haar functions adapted to matrix weights and a weighted matrix  embedding theorem. The remainder of the paper discusses the generalization of Petermichl-Pott's result to the matrix setting as well as current conjectures and potential modifications.

In Section \ref{sec:square}, we prove preliminary bounds involving a generalized square function, which are interesting in their own right. The main results, given in Theorems \ref{thm:SquareEst1} and \ref{thm:SquareEst2}, are the following upper and lower estimates:
\[\begin{aligned}
\sum_{I \in \mathcal{D}} \left \langle \langle W \rangle_I \widehat{f}(I),
\widehat{f}(I) \right \rangle_{\mathbb{C}^d} & \lesssim [ W  ]_{A_2}^2 \textnormal{log}\, [W]_{A_2} \ \| f \|^2_{L^2(W)};  \\
\| f \|^2_{L^2(W)} & \lesssim  [W]_{A_2} \textnormal{log}\, [W]_{A_2}  \
\sum_{I \in \mathcal{D}} \left \langle \langle W \rangle_I \widehat{f}(I),
\widehat{f}(I) \right \rangle_{\mathbb{C}^d}, 
\end{aligned}
\]
which hold for all $f \in L^2(W)$. Other proofs of the upper square function bound appear in \cite{I15, nt97, vol97}. Our paper improves the stated dependence of $[W]_{A_2}^4$ and $[W]_{A_2}^3$ in \cite{I15}. Similarly, although the \cite{nt97, vol97} proofs will give constants depending on $[W]_{A_2}$, we did not track this dependence explicitly, and it seems unlikely that any resulting constants would be near optimal. It is also worth pointing out that the analogous estimates appearing in \cite{petpot02} for the scalar case do not include a $\log [w]_{A_2}$ term. Rather, this comes from the matrix embedding theorem. Another technicality is that to use the scalar arguments, we needed to reduce to the situation of bounded matrix weights; this reduction is handled in Remark \ref{rem:bounded} and  involves truncations at the level of eigenvalues.

In Section \ref{sec:ht}, we prove the previously-discussed bound for the Hilbert transform \eqref{eqn:htbound}, which appears in Theorem \ref{thm:ht}. This follows from an estimate on first order Haar shifts, since the Hilbert transform can be represented as an average of such Haar shifts. The main argument uses our previous square function bounds and the fact that the square function norm  is unaffected by these simple Haar shifts. 

%

In Section \ref{sec:hm}, we apply similar arguments to Haar multipliers. Namely, let $\sigma=\{\sigma_I\}_{I\in\mathcal{D}}$ be a sequence of matrices indexed by the dyadic intervals and define the Haar multiplier $T_{\sigma}$ 
by
$$
T_{\sigma}f \equiv \sum_{I\in\mathcal{D}} \sigma_I \widehat{f}(I) h_I,
$$
where the Haar coefficients $\widehat{f}(I)$ and Haar functions $\{h_I \}$ are defined precisely in Section \ref{sec:defn}. Then for all $f \in L^2(W)$, we show in Theorem \ref{thm:HaarMult} that
\[ \| T_{\sigma}  \|_{L^2(W) \rightarrow L^2(W)} \lesssim  \left\Vert \sigma\right\Vert_{\infty}[W]_{A_2}^{\frac{3}{2}} \log  [W]_{A_2}, \]
where $\Vert \sigma\Vert_{\infty}=\sup_{I\in\mathcal{D}} \big\Vert \big\langle W\big\rangle_I^{\frac{1}{2}}\sigma_I \big\langle W\big\rangle_I^{-\frac{1}{2}}\big\Vert$. In \cite{IKP}, Isralowitz-Kwon-Pott established this  boundedness result for $p=2$ and a similar one for all $1<p<\infty.$ In their paper's most recent version, they also mention that, using our square function bounds in Theorems \ref{thm:SquareEst1} and \ref{thm:SquareEst2}, their proof will give the same dependence on $[W]_{A_2}$ in the $p=2$ case. A recent result by Pott-Stoica \cite{ps15}, which uses methods from \cite{t12},  reduces the study of Calder\'on-Zygmund operators to the study of Haar multipliers. Pairing this with our estimate gives
\[ \|T  \|_{L^2(W) \rightarrow L^2(W)} \lesssim  [W]_{A_2}^{\frac{3}{2}} \log  [W]_{A_2}, \]
for all Calder\'on-Zygmund operators $T$.

In Section \ref{sec:open}, we discuss related open questions and conjectures. Specifically, one would hope to remove the $\log [W]_{A_2}$ from our norm bounds. One original barrier was the lack of a sharp weighted matrix Carleson Embedding Theorem, like the one used in \cite{petpot02}. Very recently, this result was proved by Culiuc-Treil in \cite{ct15}. In Section \ref{sec:open}, we show how to modify our earlier arguments to potentially use this new Carleson Embedding Theorem to improve our bounds on the square function, Hilbert transform, and Haar multipliers. However, finishing the proof requires a testing condition, which so far has remained elusive.  Finally, one can ask if there are similar bounds for operators on $L^p(W)$, with the weight $W$ in $A_p$. This is an interesting but hard problem related to sharp extrapolation. We end our paper with a discussion of the potential complications arising in the matrix setting.

\section{Basic Facts and Notation} \label{sec:defn}

Throughout this paper, $A \lesssim B$ indicates that $A \le C(d) B$, for some constant $C(d)$
that may depend on the dimension $d$.

\subsection{Dyadic Grids}
Let $\mathcal{D}$ denote the standard dyadic grid. For $\alpha \in \mathbb{R}$ and 
$r>0$, let $\mathcal{D}^{\alpha,r}$ denote
the dyadic grid $\{\alpha+ rI: I \in\mathcal{D} \}$ and let $\{h_I\}_{I \in \mathcal{D}^{\alpha, r}}$
denote the Haar functions adapted to $\mathcal{D}^{\alpha,r}$  and normalized in $L^2.$
We will use these shifted dyadic grids in Section \ref{sec:ht}, when studying the Hilbert transform. However, in much of what follows, we omit the superscripts $\alpha,r$  because the arguments hold
for all such dyadic grids. 

To be precise, for $I \in \mathcal{D}$, let $I_+$ denote its right half and $I_-$ its left half. Then $h_I$ is defined by
\[ 
h_I \equiv  |I|^{-\frac{1}{2}} \left( \textbf{1}_{I_+} - \textbf{1}_{I_-} \right) \qquad \forall I \in \mathcal{D},\]
where $\textbf{1}_{E}$ is the characteristic function of the set $E$.
Similarly, define $h_I^1 \equiv \textbf{1}_I\frac{1}{|I|}$ for any $I \in \mathcal{D}.$ One should notice that the non-cancellative Haar functions have a different normalization. Now, let $f \in L^2$. To define $\widehat{f}(I)$, let $\nu_1, \dots, \nu_d$ be an orthonormal basis in $\mathbb{C}^d.$
Then, 
\[ 
\widehat{f}(I) \equiv \int_I f(x)h_I(x) \ dx = \sum_{j=1}^d \langle f, h_I \nu_j \rangle_{L^2} \, \nu_j.
\]
Note that this decomposition works for \emph{any} orthonormal basis. In the later proofs, we will use an orthonormal basis that depends on $I$.

\subsection{Disbalanced Haar functions}
If $W$ is a matrix weight, then its entries are locally-integrable and we can define
\[ W(I) \equiv \int_I W(x) \ dx \text{ and } \left \langle W \right \rangle_I \equiv \frac{1}{|I|} \int_I W(x) \ dx.\] 
Similarly, we have: 
\[ 
\widehat{W}(I) = \int_{\mathbb{R}} W(x)h_I(x) \ dx =  \frac{1}{2} |I|^{\frac{1}{2}} \left( \left \langle W \right \rangle_{I_+} -  \left \langle W \right \rangle_{I_-} \right).
\]
Then $L^2(W)$ is defined by \eqref{eqn:L2W} and if $f,g \in L^2(W)$, then 
\[ \langle f,g \rangle_{L^2(W)} = \int_{\mathbb{R}} \left \langle W(x) f(x), g(x) \right \rangle_{\mathbb{C}^d} dx.\]
In the main proof, we will use \emph{disbalanced Haar functions
adapted to $W$}. Treil and Volberg introduce these in the matrix setting in \cite{vt97}. To define them,
fix $I \in \mathcal{D}$ and  let $e^1_I, \dots, e^d_I$ be a set of orthonormal 
eigenvectors of $\langle W \rangle_I.$ Define
\[ 
w_I^k \equiv \left \| \langle W \rangle_I^{\frac{1}{2}} e^k_I \right \|^{-1}_{\mathbb{C}^d} 
= \left  \| \langle W \rangle_I^{-\frac{1}{2}} e^k_I \right \|_{\mathbb{C}^d}.  
\]
Then, the vector-valued functions $\{w_I^k h_I e^k_I\}_{I \in \mathcal{D}, 1 \le k \le d}$ are normalized in $L^2(W).$  Define the disbalanced Haar functions
\[ g^{W,k}_I\equiv w_I^k h_I e^k_I + h_I^1 \tilde{e}_I^k, \]
where the vector $\tilde{e}_I^k = A(W,I) e^k_I$ and 
\[ 
A(W,I) = \frac{1}{2} | I |^{\frac{1}{2}} \langle W \rangle_I^{-1} \left( \langle
W \rangle_{I_-} - \langle W \rangle_{I_+} \right) \langle W \rangle_I^{-\frac{1}{2}}.  
\]
Simple calculations, which appear in \cite{vt97}, show that
\begin{equation} \label{eqn:orthog} \left \langle g^{W,k}_I, g^{W,j}_J \right \rangle_{L^2(W)} = 0  \qquad \forall \ J \ne I, \ 1 \le j,k \le d, \end{equation}
and the functions satisfy $\| g^{W,k}_I \|_{L^2(W)} \le 5$. It is worth pointing out that for $I=J$, then the inner product
\[ \left \langle g^{W,k}_I, g^{W,j}_I \right \rangle_{L^2(W)} \]
need not be zero. Using simple computations, we can write standard Haar functions using these disbalanced ones as follows
\begin{equation} \label{eqn:disHaar}
h_I e^k_I = \left(w_I^k \right)^{-1} g^{W,k}_I - \left(w_I^k \right)^{-1}  A(W,I) h^1_I e^k_I \end{equation}
for all  $I \in \mathcal{D}$ and  $k=1, \dots, d.$

\subsection{Carleson Embedding Theorem}

To prove our main results, we initially proceed as in Petermichl-Pott's proof of the scalar case in \cite{petpot02}. Some arguments
generalize easily, but to finish the proof, we require a matrix weighted embedding theorem. 
Specifically, we use the following result of  Treil-Volberg, which appears as Theorem 6.1 in \cite{vt97}:

\begin{theorem}[Treil and Volberg, \cite{vt97}] \label{thm:tv}
 Let $W$ be a  $d \times d$  matrix weight in $A_2.$ Then for all $f\in L^2$, 
 \[
 \sum_{I \in \mathcal{D}} |I| \left \| \left \langle W \right \rangle_I^{-\frac{1}{2}} \left(  
 \left \langle W \right \rangle_{I_-} - \left \langle W \right \rangle_{I_+} \right ) \left \langle W \right \rangle_I^{-\frac{1}{2}} \right \|^2 
  \left \| \left \langle W \right \rangle_I^{-\frac{1}{2}} \left \langle W^{\frac{1}{2}} f \right \rangle_I \right \|^2 
  \lesssim
 [W]_{A_2} \textnormal{log}\, [W]_{A_2} \| f \|_{L^2}^2.
 \]
\end{theorem}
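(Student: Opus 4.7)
The strategy I would pursue is a matrix-weighted Carleson embedding argument based on a stopping time construction. First, I would rewrite the inequality by substituting $g = W^{1/2} f$ so that $\|g\|_{L^2(W^{-1})} = \|f\|_{L^2}$ and $\langle W^{1/2} f \rangle_I = \langle g \rangle_I$; then the claim becomes a weighted embedding
\[
\sum_{I \in \mathcal{D}} |I|\, \alpha_I^2\, \bigl\| \langle W \rangle_I^{-1/2} \langle g \rangle_I \bigr\|^2 \lesssim [W]_{A_2} \log [W]_{A_2}\, \|g\|^2_{L^2(W^{-1})},
\]
where $\alpha_I := \bigl\| \langle W \rangle_I^{-1/2}(\langle W \rangle_{I_-} - \langle W \rangle_{I_+})\langle W \rangle_I^{-1/2} \bigr\|$ measures the martingale oscillation of $W$ at scale $I$. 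Written this way, the left side is precisely a testing of $g$ against positive operators $T_I = \alpha_I^2 \langle W \rangle_I^{-1}$, and the problem splits into (i) verifying that the family $\{|I|\, T_I\}$ behaves like an operator-valued Carleson sequence and (ii) deducing the embedding bound.

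For (i), the first step is to show the Carleson-type inequality $\sum_{I \subset J} |I|\, \alpha_I^2 \lesssim [W]_{A_2}\, |J|$ for every $J \in \mathcal{D}$. This follows by a telescoping argument: rewriting
\[
\alpha_I^2 \leq 2\bigl\| \langle W \rangle_I^{-1/2} \langle W \rangle_{I_\pm} \langle W \rangle_I^{-1/2} - \mathrm{Id}\bigr\|^2,
\]
and bounding each term using the pointwise relation $\langle W \rangle_I^{-1/2} \langle W\rangle_{I_\pm} \langle W \rangle_I^{-1/2} \le 2 \langle W\rangle_I^{-1/2} \langle W\rangle_{I_\pm} \langle W\rangle_I^{-1/2}$ and the $A_2$ estimate $\|\langle W\rangle_I^{1/2} \langle W^{-1}\rangle_I \langle W\rangle_I^{1/2}\| \le [W]_{A_2}$; summing the resulting telescoping Bellman-type differences across scales produces the desired Carleson bound.

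For (ii), I would carry out a stopping-time (principal cube) decomposition to pass from this scalar Carleson condition to the required vector embedding. Fix $J_0$ and declare the children $J \in \mathrm{ch}(J_0)$ to be those maximal dyadic $J \subset J_0$ for which either $\|\langle W \rangle_{J_0}^{1/2}\langle W\rangle_J^{-1} \langle W \rangle_{J_0}^{1/2}\| > 2$ or the analogous inequality for $W^{-1}$ holds; iterate to obtain generations $\mathcal{G}_0, \mathcal{G}_1, \ldots$. On the intervals $I$ strictly between two consecutive generations, $\langle W\rangle_I \approx \langle W\rangle_{J_0}$ up to a bounded factor, so the operator weight acts like a constant matrix and a scalar Carleson embedding (using the Carleson condition established above) gives a bound on the contribution from each generation in terms of the local $L^2(W^{-1})$ mass. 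Summing over generations costs one factor of $[W]_{A_2}$ for the scalar embedding constant, and a $\log[W]_{A_2}$ factor arises because the number of generations needed before the geometric decay from $A_2$ dominates is proportional to $\log[W]_{A_2}$, analogous to the dimensional logarithm in the work of Nazarov--Pisier--Treil--Volberg.

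The principal obstacle is step (ii): in the scalar case the stopping times force pointwise comparability $w \asymp \mathrm{const}$ on each layer, but in the matrix case the comparability $\langle W \rangle_I \lesssim \langle W \rangle_J$ is only in the operator sense, and the lack of the implication $0 < W < V \Rightarrow W^2 < V^2$ means that one cannot freely square or invert such estimates. Handling this carefully — most naturally through ``reducing'' matrices, i.e. self-adjoint $R_I$ with $R_I^{-1} \langle W \rangle_I^{1/2}$ nearly isometric on the relevant subspaces — is what makes the proof delicate and is where the logarithmic overhead appears. A fully sharp version avoiding the logarithm, as used later in the paper via Culiuc--Treil, seems to require substantially different machinery.
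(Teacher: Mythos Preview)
The paper does not actually prove this theorem. It is stated as a quotation of Treil--Volberg (Theorem~6.1 of \cite{vt97}), with the only added content being the remark that the explicit constant $[W]_{A_2}\log[W]_{A_2}$ can be extracted by reading through the proofs of their Lemma~3.1, Lemma~3.6, Theorem~4.1 and Theorem~6.1. So there is no ``paper's own proof'' to compare against beyond that pointer.

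That said, your outline is essentially a sketch of the Treil--Volberg argument the paper is invoking: a matrix Carleson condition on the oscillation coefficients $\alpha_I$, followed by a stopping-time (principal cube) decomposition in which one freezes $\langle W\rangle_I$ within each layer and pays the logarithm when summing over generations; your mention of reducing operators is exactly the device used there to cope with the failure of $0<A<B\Rightarrow A^2<B^2$. So in spirit you are reproducing the cited proof rather than offering an alternative route.

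One local correction: your justification of step~(i) is garbled. The displayed ``pointwise relation'' $\langle W\rangle_I^{-1/2}\langle W\rangle_{I_\pm}\langle W\rangle_I^{-1/2}\le 2\,\langle W\rangle_I^{-1/2}\langle W\rangle_{I_\pm}\langle W\rangle_I^{-1/2}$ is vacuous, and nothing in what you wrote actually telescopes. Presumably you meant $\langle W\rangle_{I_\pm}\le 2\langle W\rangle_I$, but even with that, getting $\sum_{I\subset J}|I|\,\alpha_I^2\lesssim [W]_{A_2}|J|$ requires a genuine argument (in \cite{vt97} this is their Lemma~3.6, proved via a Bellman-type potential, not a one-line telescoping). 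The conclusion you state is correct, but the reasoning you gave for it is not.
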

The constant $ [W]_{A_2} \textnormal{log}\, [W]_{A_2} $ is not specified in Treil-Volberg's statement of the theorem. However, a 
careful reading of the proofs of their Lemma 3.1, Lemma 3.6, Theorem 4.1, and Theorem 6.1 reveal the above constant. 

\section{Square Function Estimate} \label{sec:square}

Recall the dyadic Littlewood-Paley square function, typically defined by
\begin{equation} \label{eqn:square1} Sf(x)^2 \equiv \sum_{I \in \mathcal{D}} \left | \widehat{f}(I)\right|^2 h^1_I(x),\end{equation}
for $f$ in $L^2(\mathbb{R}, \mathbb{C}),$ which coincides with the usual definition summing square norms of martingale differences in the dyadic filtration. Here is an alternate formulation.  Let $\{ -1,1\}^{\mathcal{D}}$ denote the set of all sequences indexed by the dyadic intervals whose terms only take the values $\pm 1.$
Let $\sigma\equiv \{\sigma_I \}_{I\in \mathcal{D}}$ be an element of  $\{ -1,1\}^{\mathcal{D}}$ and let $T_{\sigma}$ denote the associated Haar multiplier
\[ T_{\sigma} f = \sum_{I \in \mathcal{D}} \sigma_I \widehat{f}(I) h_I.\]
For any fixed $x$ and collection $\mathcal{D}_x$ of dyadic intervals containing $x$, consider the collection of sequences $\{-1,1\}^{\mathcal{D}_x}$ interpreted as the probability space of random sequences of independent realisations of a random variable taking values in $\pm 1$ with equal probability. So we associate the natural probability measure that assigns measure $2^{-k}$ to each cylinder of length $k$ (Bernoulli measure).
Then, as mentioned in  \cite{petpot02}, the square function can be equivalently defined as
\begin{equation} \label{eqn:square2} 
Sf(x)^2  \equiv \mathbb{E}  \left( | T_{\sigma} f(x)|^2 \right).  \end{equation}
This is equivalent to the previous definition \eqref{eqn:square1} because
\[
 \mathbb{E} \left( | T_{\sigma} f(x)|^2 \right) = \sum_{I, J \in \mathcal{D}} \mathbb{E}\left( \sigma_I \sigma_J \right) \widehat{f}(I) \overline{ \widehat{f}(J)} h_I(x) h_J(x) =
\sum_{I \in \mathcal{D}} | \widehat{f}(I)|^2 h^1_I(x).
  \]
This follows because each $\sigma_I$ takes values $\pm 1$ with equal probability. Hence, $\mathbb{E}(\sigma_I \sigma_J) = 0$ if $I \ne J$ and $\mathbb{E}(\sigma_I \sigma_J) = 1$ if $I=J.$

\subsection{Generalized Square Function}  The classical vector analogue of  \eqref{eqn:square1} is
\[ Sf(x)^2 \equiv \sum_{I \in \mathcal{D}} \left \| \widehat{f}(I)\right\|^2_{\mathbb{C}^d} h^1_I(x).
\]
Here $Sf$ is naturally scalar-valued, as it is equal to the square function summing square norms of martingale differences of vector valued martingales. However, this definition is not useful in the weighted setting because  it does not make sense to study $S$ as an operator from $L^2(W)$ to $L^2(W).$  Instead, to incorporate weights, we define a different square function $S_W$ for each weight $W$. Pulling the weight inside an operator like this is a standard trick when studying boundedness; for instance, instead of studying $T: L^2(w) \rightarrow L^2(w)$, it is often more convenient to study $M_w T: L^2(w) \rightarrow L^2(w^{-1})$, where $M_w$ is multiplication by $w$. This trick has proven essential in the vector-valued theory. For example, Christ-Goldberg's maximal functions from \cite{cg01, gol03}  incorporate the matrix weights into the operator as follows
\begin{equation} \label{eqn:max} M_W^p f(x) \equiv \sup_{I:x \in I}  \frac{1}{|I|}\int_I   \left \| W^{\frac{1}{p}}(x) W^{-\frac{1}{p}}(y) f(y) \right \|_{\mathbb{C}^d} dy, \qquad 1 < p < \infty, \end{equation}
where the superscript $p$ in $M_W^p$ just indicates the dependence of the operator on $p$. These maximal operators map into scalar-valued spaces of functions and give important information about $A_p$ weights. For our square function, we do something similar. We still let
\[
T_{\sigma}f=  \sum_{I \in \mathcal{D}}  \sigma_I \widehat{f}(I) h_I,
\]
where  $\sigma$ is an arbitrary sequence in $\{1,-1\}^{\mathcal{D}}$. Then if we define
\[
S_W:L^2(\mathbb{R},\mathbb{C}^d)\to L^2(\mathbb{R},\mathbb{R}) \quad \text{ by } \quad
S_Wf(x)^2 \equiv \mathbb{E} \left( \left \| W(x)^{\frac{1}{2}} T_{\sigma}f(x) \right \|^2_{\mathbb{C}^d} \right),
\]
we have
\[\begin{aligned}
\| S_W f \|_{L^2(\mathbb{R},\mathbb{R})}^2
& =\int_{\mathbb{R}}  \mathbb{E}\sum_{I,J \in \mathcal{D}}  \sigma_I \sigma_Jh_I(x) h_J(x)  \left \langle W(x)\widehat{f}(I)  ,  \widehat{f}(J) \right \rangle_{\mathbb{C}^d}\, dx \\
& =\int_{\mathbb{R}}  \sum_{I, J \in \mathcal{D}}  \mathbb{E} (\sigma_I \sigma_J)h_I(x) h_J(x)  \left \langle W(x)\widehat{f}(I)  ,  \widehat{f}(J) \right \rangle_{\mathbb{C}^d}\, dx \\
&=\sum_{I \in \mathcal{D}} \left \langle \langle W \rangle_I \widehat{f}(I),
\widehat{f}(I) \right \rangle_{\mathbb{C}^d}.
\end{aligned} 
\] 
Volberg introduced the study of these sums in  \cite{vol97}. Notice that in the scalar situation, we can similarly define $S_w$ by
\[ S_w f(x)^2 \equiv \mathbb{E} \left(  |w(x)^{\frac{1}{2}} T_{\sigma}f(x) |^2 \right).\]
It is immediate that
\[ \| S_w f \|_{L^2(\mathbb{R}, \mathbb{R})}^2 = \sum_{I \in \mathcal{D}} \langle w \rangle_I |\widehat{f}(I)|^2 = \| Sf \|^2_{L^2(w)}. \]
So, in the scalar situation, these  
square functions $S_w$ are bounded from $L^2(w)$ to $L^2(\mathbb{R}, \mathbb{R})$ precisely when the standard square function $S$ is bounded from $L^2(w)$ to $L^2(w),$ and the operators have the same norm. Thus, studying $S_W$ from $L^2(W)$ to $L^2(\mathbb{R}, \mathbb{R})$ gives a natural generalization of the standard square function questions. 

\subsection{Square Function Bounds} 
In the scalar setting, the square function $S$ is bounded on $L^2(w)$ if and only if $w$ is an $A_2$ weight and the dependence on $[w]_{A_2}$ is linear.  For matrix $A_2$ weights and the new square functions $S_W$,
we obtain the following similar bound, which differs from the scalar bound by a logarithm:

\begin{theorem} \label{thm:SquareEst1} Let $W$ be a  $d \times d$ 
matrix weight in $A_2.$ Then 
\[ \| S_W f \|^2_{L^2(\mathbb{R}, \mathbb{R})}  \lesssim [ W  ]_{A_2}^2 \textnormal{log}\, [W]_{A_2} \ \| f \|^2_{L^2(W)} \quad \forall f \in L^2(W). 
\]
\end{theorem}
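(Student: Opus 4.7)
The strategy adapts the scalar argument of Petermichl--Pott \cite{petpot02} to the matrix setting, combining the disbalanced Haar functions $g_I^{W,k}$ with the matrix Carleson embedding of Theorem~\ref{thm:tv}. First, via a truncation at the eigenvalue level (as outlined in Remark~\ref{rem:bounded}), I reduce to the case where $W$ has uniformly bounded eigenvalues, so that all pointwise manipulations below are justified and the general statement follows by monotone convergence. For each $I \in \mathcal{D}$, I then expand $\widehat{f}(I)$ in the orthonormal eigenbasis $\{e_I^k\}_{k=1}^d$ of $\langle W\rangle_I$: setting $c_I^k = \langle \widehat{f}(I), e_I^k\rangle_{\mathbb{C}^d}$ and using $\langle W\rangle_I e_I^k = (w_I^k)^{-2}e_I^k$,
\[
\| \langle W\rangle_I^{1/2}\widehat{f}(I) \|_{\mathbb{C}^d}^2 = \sum_{k=1}^d (w_I^k)^{-2}|c_I^k|^2.
\]

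Next, I substitute the disbalanced Haar identity \eqref{eqn:disHaar} into the unweighted pairing $c_I^k = \langle f, h_I e_I^k\rangle_{L^2}$ to split
\[
c_I^k = (w_I^k)^{-1}\bigl(\langle f, g_I^{W,k}\rangle_{L^2} - \langle \langle f\rangle_I, \tilde{e}_I^k\rangle_{\mathbb{C}^d}\bigr),
\]
so that $\| S_W f \|^2_{L^2(\mathbb{R},\mathbb{R})} \le 2(T_1 + T_2)$ with
\[
T_1 = \sum_{I,k}(w_I^k)^{-4}|\langle f, g_I^{W,k}\rangle_{L^2}|^2, \qquad T_2 = \sum_{I,k}(w_I^k)^{-4}|\langle \langle f\rangle_I, \tilde{e}_I^k\rangle_{\mathbb{C}^d}|^2.
\]

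For $T_2$, I will perform the inner $k$-sum using $\tilde{e}_I^k = A(W,I)e_I^k$ and the eigendecomposition of $\langle W\rangle_I$ to rewrite $T_2 = \sum_I \| \langle W\rangle_I A(W,I)^*\langle f\rangle_I \|^2$. Inserting the explicit form of $A(W,I)$ and the factorization $\langle W\rangle_{I_-}-\langle W\rangle_{I_+} = \langle W\rangle_I^{1/2}D_I\langle W\rangle_I^{1/2}$ (with $D_I$ as in Theorem~\ref{thm:tv}) turns $T_2$ into a matrix Carleson sum that is controlled by Theorem~\ref{thm:tv}, after a change of variable matching its $\langle W^{1/2}g\rangle_I$ form. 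Extra factors of $\|\langle W\rangle_I\|$ arising along the way are absorbed via the matrix $A_2$ relation $\|\langle W\rangle_I\|\|\langle W^{-1}\rangle_I\| \le [W]_{A_2}$, yielding $T_2 \lesssim [W]_{A_2}^2\log[W]_{A_2}\,\|f\|^2_{L^2(W)}$. For $T_1$, I will exploit the $L^2(W)$-orthogonality of $\{g_I^{W,k}\}_I$ combined with the uniform bound $\|g_I^{W,k}\|_{L^2(W)}\le 5$. A Bessel-type inequality, applied after converting the unweighted pairing via the identity $\langle f, g\rangle_{L^2} = \langle f, W^{-1}g\rangle_{L^2(W)}$ and absorbing the eigenvalue weights $(w_I^k)^{-4}$ through the $A_2$ condition, gives $T_1 \lesssim [W]_{A_2}^2\|f\|^2_{L^2(W)}$; adding the two estimates proves the theorem.

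The main obstacle I expect is the rigorous execution of the Bessel step for $T_1$: the orthogonality of the disbalanced Haar basis lives in $L^2(W)$, but the pairings appearing are unweighted, so bridging this gap without incurring more than one additional power of $[W]_{A_2}$ requires a careful comparison of weighted and unweighted norms through the matrix $A_2$ condition. A secondary subtlety is the precise form in which Theorem~\ref{thm:tv} must be invoked for $T_2$, since that embedding is stated in terms of $\langle W^{1/2}g\rangle_I$ rather than $\langle f\rangle_I$; the necessary substitution must be tracked carefully to produce the correct dependence $[W]_{A_2}^2\log[W]_{A_2}$.
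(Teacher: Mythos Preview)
Your direct attack has a genuine gap in both the $T_1$ and $T_2$ estimates, rooted in the same issue: the eigenvalue weights $(w_I^k)^{-4}$ are the \emph{squares} of the eigenvalues of $\langle W\rangle_I$, and these are \emph{not} controlled by $[W]_{A_2}$. The $A_2$ relation you cite, $\|\langle W\rangle_I\|\,\|\langle W^{-1}\rangle_I\|\le [W]_{A_2}$, only bounds a product; it says nothing about $\|\langle W\rangle_I\|$ by itself. Concretely, for $T_1$ you rewrite $\langle f,g_I^{W,k}\rangle_{L^2}=\langle f,W^{-1}g_I^{W,k}\rangle_{L^2(W)}$ and want Bessel in $L^2(W)$ for the system $\{(w_I^k)^{-2}W^{-1}g_I^{W,k}\}$. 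But these functions are orthogonal in $L^2(W)$ precisely when the $g_I^{W,k}$ are orthogonal in $L^2(W^{-1})$ --- which they are not (their orthogonality is in $L^2(W)$, see \eqref{eqn:orthog}). So no Bessel inequality applies, and the weights cannot be absorbed. For $T_2$ the same phenomenon appears as a leftover factor $\|\langle W\rangle_I\|^2$ sitting outside the Carleson sum of Theorem~\ref{thm:tv}; moreover, the substitution $g=W^{-1/2}f$ that matches the form $\langle W^{1/2}g\rangle_I=\langle f\rangle_I$ yields $\|g\|_{L^2}^2=\|f\|_{L^2(W^{-1})}^2$, not $\|f\|_{L^2(W)}^2$, so you end up in the wrong weighted space.

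The paper avoids this by \emph{not} proving Theorem~\ref{thm:SquareEst1} directly. It first proves the lower bound, Theorem~\ref{thm:SquareEst2}, for which the disbalanced-Haar decomposition produces \emph{no} eigenvalue weight (the $(w_I^k)^{2}$ coming from $\langle W\rangle_I^{-1}$ exactly cancels the $(w_I^k)^{-2}$ from \eqref{eqn:disHaar}); that argument yields the operator inequality $\langle D_W^{-1}f,f\rangle_{L^2}\lesssim C_W\langle M_W^{-1}f,f\rangle_{L^2}$, i.e.\ \eqref{eqn:est2}. Theorem~\ref{thm:SquareEst1} then follows in two lines: apply \eqref{eqn:est2} with $W$ replaced by $W^{-1}$ (legitimate since $[W^{-1}]_{A_2}=[W]_{A_2}$), and bridge via the matrix $A_2$ inequality $\langle W\rangle_I\le [W]_{A_2}\langle W^{-1}\rangle_I^{-1}$, equivalently $D_W\le [W]_{A_2}(D_{W^{-1}})^{-1}$. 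This is where the extra factor $[W]_{A_2}$ enters, giving $[W]_{A_2}^2\log[W]_{A_2}$. If you want to salvage your direct computation, the minimal repair is to use $\langle\langle W\rangle_I\widehat f(I),\widehat f(I)\rangle\le[W]_{A_2}\langle\langle W^{-1}\rangle_I^{-1}\widehat f(I),\widehat f(I)\rangle$ at the outset and then run your disbalanced-Haar expansion with the system $g_I^{W^{-1},k}$ adapted to $W^{-1}$; that is exactly the paper's argument, unpacked.
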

To establish Theorem \ref{thm:SquareEst1}, we follow the arguments in \cite{petpot02}, which first prove a lower bound on the square function. Our matrix analogue is Theorem \ref{thm:SquareEst2} and 
the proof uses both arguments from \cite{petpot02}
and  Theorem \ref{thm:tv}. As with Theorem \ref{thm:SquareEst1}, this lower bound differs from the scalar bound by a factor of $\log [W]_{A_2}.$

\begin{theorem} \label{thm:SquareEst2}  Let $W$ be a  $d \times d$  matrix weight in $A_2.$ Then  
\[ 
\| f \|^2_{L^2(W)} \lesssim  [W]_{A_2} \log [W]_{A_2}  \
 \| S_W f \|^2_{L^2(\mathbb{R}, \mathbb{R})} \quad \forall f \in L^2(W). 
\]
\end{theorem}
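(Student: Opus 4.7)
The plan is to adapt Petermichl-Pott's scalar lower-bound argument from \cite{petpot02} to the matrix setting, using the disbalanced Haar system of Treil-Volberg as the workhorse and Theorem \ref{thm:tv} in place of the scalar Carleson embedding. By Remark \ref{rem:bounded} I may reduce to a bounded weight $W$, and by density it is enough to take $f$ a finite linear combination of Haar functions. Starting from $f = \sum_I \widehat{f}(I) h_I$, expanding each coefficient in the orthonormal eigenbasis of $\langle W \rangle_I$ as $\widehat{f}(I) = \sum_k \langle \widehat{f}(I), e^k_I \rangle e^k_I$, and applying \eqref{eqn:disHaar} to each $h_I e^k_I$, I obtain $f = F_1 - F_2$ with
\begin{equation*}
F_1 \equiv \sum_{I, k} (w^k_I)^{-1} \langle \widehat{f}(I), e^k_I \rangle_{\mathbb{C}^d}\, g^{W,k}_I, \qquad F_2 \equiv \tfrac{1}{2} \sum_I |I|^{\frac{1}{2}}\, h^1_I\, \langle W \rangle_I^{-1} \bigl( \langle W \rangle_{I_-} - \langle W \rangle_{I_+} \bigr) \widehat{f}(I).
\end{equation*}
The collapse of the sum over $k$ in the non-cancellative part uses the eigenvector identity $\langle W \rangle_I^{-\frac{1}{2}} e^k_I = w^k_I e^k_I$.

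For $F_1$, the orthogonality \eqref{eqn:orthog} across distinct intervals, together with the bound $\| g^{W,k}_I \|_{L^2(W)} \le 5$ and Cauchy-Schwarz in the index $k$, yields
\begin{equation*}
\| F_1 \|_{L^2(W)}^2 \lesssim \sum_I \sum_k (w^k_I)^{-2} \bigl| \langle \widehat{f}(I), e^k_I \rangle_{\mathbb{C}^d} \bigr|^2 = \sum_I \langle \langle W \rangle_I \widehat{f}(I), \widehat{f}(I) \rangle_{\mathbb{C}^d} = \| S_W f \|_{L^2(\mathbb{R}, \mathbb{R})}^2,
\end{equation*}
since $(w^k_I)^{-2}$ is precisely the eigenvalue of $\langle W \rangle_I$ associated with $e^k_I$ and $\{e^k_I\}$ diagonalizes $\langle W \rangle_I$.

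The main work is the estimate on $F_2$, handled by duality against a test function $\phi \in L^2(W)$ with $\| \phi \|_{L^2(W)} = 1$. Factoring $\langle W \rangle_{I_-} - \langle W \rangle_{I_+} = \langle W \rangle_I^{\frac{1}{2}} D_I \langle W \rangle_I^{\frac{1}{2}}$, where $D_I$ is exactly the matrix appearing in Theorem \ref{thm:tv}, a direct computation using self-adjointness of $W$ gives
\begin{equation*}
\langle F_2, \phi \rangle_{L^2(W)} = \tfrac{1}{2} \sum_I |I|^{\frac{1}{2}} \bigl\langle D_I \langle W \rangle_I^{\frac{1}{2}} \widehat{f}(I),\ \langle W \rangle_I^{-\frac{1}{2}} \langle W\phi \rangle_I \bigr\rangle_{\mathbb{C}^d}.
\end{equation*}
Cauchy-Schwarz, first in $\mathbb{C}^d$ to pull out $\| D_I \|$ and then over $I$, bounds this by $\| S_W f \|_{L^2(\mathbb{R}, \mathbb{R})}$ times $\bigl( \sum_I |I|\, \| D_I \|^2\, \| \langle W \rangle_I^{-\frac{1}{2}} \langle W \phi \rangle_I \|^2 \bigr)^{\frac{1}{2}}$. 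Applying Theorem \ref{thm:tv} to $\psi := W^{\frac{1}{2}} \phi \in L^2$, for which $\| \psi \|_{L^2} = \| \phi \|_{L^2(W)} = 1$ and $\langle W^{\frac{1}{2}} \psi \rangle_I = \langle W \phi \rangle_I$, controls the second factor by $([W]_{A_2} \log [W]_{A_2})^{\frac{1}{2}}$. Taking the supremum over $\phi$, squaring, and combining with the $F_1$ estimate via $\| f \|_{L^2(W)}^2 \le 2 \| F_1 \|_{L^2(W)}^2 + 2 \| F_2 \|_{L^2(W)}^2$ completes the proof.

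The delicate step is the bookkeeping in the $F_2$ estimate: the $\langle W \rangle_I^{\pm \frac{1}{2}}$ factors originating from the definition of $A(W, I)$ must be rearranged precisely so that the matrix $D_I$ appears in exactly the form needed by Theorem \ref{thm:tv}. The additional $\log [W]_{A_2}$ in the final bound is inherited verbatim from the constant in Treil-Volberg's Carleson embedding, and removing it is the open question discussed in Section \ref{sec:open}.
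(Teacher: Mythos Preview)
Your proof is correct but takes a genuinely different route from the paper's. The paper does not estimate $\|f\|_{L^2(W)}$ directly; instead it introduces the multiplication and discrete operators $M_W$, $D_W$ and, using the spectral theorem, converts the desired inequality $M_W \lesssim C_W D_W$ into the equivalent inverse inequality $D_W^{-1} \lesssim C_W M_W^{-1}$, i.e.
\[
\sum_{I} \bigl\langle \langle W\rangle_I^{-1}\widehat f(I),\widehat f(I)\bigr\rangle_{\mathbb{C}^d} \lesssim C_W\,\|f\|_{L^2(W^{-1})}^2.
\]
It then expands the scalars $\langle f,h_Ie_I^k\rangle_{L^2}$ via the disbalanced Haar formula \eqref{eqn:disHaar} and bounds the resulting $S_1,S_2,S_3$ sums, applying Theorem~\ref{thm:tv} to $g=W^{-1/2}f$ without any duality. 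Your argument bypasses the operator inversion entirely: you decompose $f$ itself as $F_1-F_2$ in $L^2(W)$, control $F_1$ by orthogonality, and control $F_2$ by pairing against a test function $\phi$ and applying Theorem~\ref{thm:tv} to $W^{1/2}\phi$. Your route is more elementary in that it avoids the spectral theorem and the boundedness/invertibility of $D_W$, $M_W$ (so in principle the reduction to bounded $W$ is less essential). The paper's detour, however, is strategic: the inverse inequality \eqref{eqn:est2} it establishes along the way is exactly what is invoked, with $W$ replaced by $W^{-1}$, to prove the upper bound Theorem~\ref{thm:SquareEst1}, so the inversion step serves both theorems at once.
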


\begin{proof} As in \cite{petpot02}, we can assume without loss
of generality that $W$ and $W^{-1}$ are bounded. For more details, see Remark \ref{rem:bounded}. 
Then $L^2(W)$ and $L^2$ are equal as sets.
For ease of notation, define the constant 
\[ C_W \equiv  [W]_{A_2} \textnormal{log}\, [W]_{A_2} . \]
Let $e_1, \dots, e_d$ be the standard orthonormal 
basis in $\mathbb{C}^d$.  Define the discrete multiplication
operator $D_W: L^2 \rightarrow L^2$ by
\[ 
D_W:  h_I e_k \mapsto \langle W \rangle_I h_I e_k \quad \forall I \in
 \mathcal{D}, k=1, \dots, d.
\]
Observe that
\[ 
\langle D_W f, f \rangle_{L_2} = \sum_{I \in \mathcal{D}} \left
\langle \langle W \rangle_I \widehat{f}(I), \widehat{f}(I) \right \rangle_{\mathbb{C}^d}.
\] 
Let $M_W$ denote multiplication by $W$.  Then, we can rewrite the desired inequality as
\begin{equation} \label{eqn:est1}
\langle M_W f ,f \rangle_{L^2} \lesssim  C_W
\langle D_W f ,f \rangle_{L^2}, \qquad \forall f \in L^2. 
\end{equation}
As in \cite{petpot02}, we convert this to an inverse inequality.  
Since $W$ and $W^{-1}$ are bounded, $D_W$ and $M_W$ 
are bounded and invertible with
$M_W^{-1} = M_{W^{-1}}$ and $D_W^{-1}$ defined by
\[
D^{-1}_W:  h_I e_k \mapsto \langle W \rangle_I^{-1} h_I e_k \quad \forall I \in
 \mathcal{D}, k=1, \dots, d.
\]
$M_W$ and $D_W$ and their inverses have well-defined square roots, with $M_W^{\frac{1}{2}} = M_{W^{\frac{1}{2}}}$ and $D_W^{\frac{1}{2}}$ sending each $h_I e_k$ to $\langle W \rangle_I^{\frac{1}{2}} h_I e_k$. Similarly, the spectral theorem implies that  the positive, invertible, self-adjoint operator  $D_W^{-\frac{1}{2}} M_W D_W^{-\frac{1}{2}}$ has a positive, invertible square root. Then, 
(\ref{eqn:est1}) is immediately equivalent to
\[ \langle D_W^{-\frac{1}{2}} M_W   D_W^{-\frac{1}{2}}f  ,f \rangle_{L^2} \lesssim  C_W
\langle  f ,f \rangle_{L^2}, \qquad \forall f \in L^2,
\]
 which one can show is equivalent to
\begin{equation} \label{eqn:est2}
\langle D^{-1}_W f ,f \rangle_{L^2} \lesssim  C_W
\langle M^{-1}_W f ,f \rangle_{L^2}, \qquad \forall f \in L^2.
\end{equation}
So to prove Theorem \ref{thm:SquareEst2},
we need to establish:
\[
\sum_{I \in \mathcal{D}} \left \langle \langle W \rangle^{-1}_I 
\widehat{f}(I), \widehat{f}(I) \right \rangle_{\mathbb{C}^d}
\lesssim C_W \| f \|^2_{L^2(W^{-1})} \quad \forall f \in L^2.
\]
We will rewrite the sum using Haar functions adapted to $W$. First, for $I \in \mathcal{D}$, 
let $e^1_I, \dots, e^d_I$ be a set of orthonormal 
eigenvectors of $\langle W \rangle_I.$ Recall that
\[ 
w_I^k \equiv \left \| \langle W \rangle_I^{\frac{1}{2}} e^k_I \right \|^{-1}_{\mathbb{C}^d} 
= \left  \| \langle W \rangle_I^{-\frac{1}{2}} e^k_I \right \|_{\mathbb{C}^d},
\]
so $w_I^k$ is the reciprocal of the square root of the eigenvalue corresponding to eigenvector $e_I^k.$
Using these definitions, expand the sum as follows:
\[
\begin{aligned}
\sum_{I \in \mathcal{D}} \left \langle \langle W \rangle^{-1}_I 
\widehat{f}(I), \widehat{f}(I) \right \rangle_{\mathbb{C}^d}
&= \sum_{I \in \mathcal{D}} \sum_{j,k =1}^d \left \langle \langle W \rangle^{-1}_I 
\langle f, h_I e^k_I \rangle_{L^2} e^k_I,  \langle f, h_I e^j_I \rangle_{L^2} e^j_I \right \rangle_{\mathbb{C}^d}\\
& =  \sum_{I \in \mathcal{D}} \sum_{j,k =1}^d \langle f, h_I e^k_I \rangle_{L^2} \overline{\langle f, h_I e^j_I \rangle_{L^2}}  \left \langle \langle W \rangle^{-1}_I 
e^k_I,   e^j_I \right \rangle_{\mathbb{C}^d} \\
& =  \sum_{I \in \mathcal{D}} \sum_{k =1}^d \left | \langle f, h_I e^k_I \rangle_{L^2}  \right |^2  \left \langle \langle W \rangle^{-1}_I 
e^k_I,   e^k_I \right \rangle_{\mathbb{C}^d} \\
& =  \sum_{I \in \mathcal{D}} \sum_{k =1}^d \left(w^k_I\right)^2 \left | \langle f, h_I e^k_I \rangle_{L^2}  \right |^2. 
\end{aligned}
\]
Now, we can expand the $h_I e^k_I$ using the disbalanced Haar functions adapted to $W$ as in (\ref{eqn:disHaar}). This 
transforms our sum as follows:
\[
\begin{aligned}
 \sum_{I \in \mathcal{D}} \sum_{k =1}^d \left(w^k_I \right)^2 \left | \langle f, h_I e^k_I \rangle_{L^2}  \right |^2 & =
\sum_{I \in \mathcal{D}} \sum_{k =1}^d \left(w^k_I \right)^2 \left | \langle f, \left(w_I^k \right)^{-1} g^{W,k}_I - \left(w_I^k \right)^{-1}  A(W,I) h^1_I e^k_I \rangle_{L^2}  \right |^2 \\
& \le \sum_{I \in \mathcal{D}} \sum_{k =1}^d \left | \langle f, g^{W,k}_I \rangle_{L^2}  \right |^2 \\
& \ \ + 2 \sum_{I \in \mathcal{D}} \sum_{k =1}^d  \left | \langle f, g^{W,k}_I \rangle_{L^2}  \langle f,  A(W,I) h^1_I e^k_I \rangle_{L^2} \right |\\
& \ \ + \sum_{I \in \mathcal{D}} \sum_{k =1}^d \left | \langle f,A(W,I) h^1_I e^k_I \rangle_{L^2}  \right |^2 \\
& = S_1 + S_2 + S_3.
\end{aligned} 
\]
It is clear that 
\[ 
S_1 = \sum_{I \in \mathcal{D}} \sum_{k =1}^d \left | \langle f, g^{W,k}_I 
\rangle_{L^2}  \right |^2 = \sum_{I \in \mathcal{D}} \sum_{k =1}^d \left | \langle W^{-1} f, g^{W,k}_I 
\rangle_{L^2(W)}  \right |^2  \lesssim \| f \|^2_{L^2(W^{-1})},
\]
since the $\{g^{W,k}_I\}$ satisfy \eqref{eqn:orthog} and are uniformly bounded in $L^2(W).$ Since $S_2 \lesssim S_1^{\frac{1}{2}} S_3^{\frac{1}{2}}$,
the main term to understand is $S_3.$ It can be written as 
\begin{eqnarray}
S_3   &= &  \sum_{I \in \mathcal{D}} \sum_{k =1}^d \left|  \left \langle f,  \frac{1}{2} | I |^{\frac{1}{2}} \langle W \rangle_I^{-1} \left( \langle
W \rangle_{I_-} - \langle W \rangle_{I_+} \right) \langle W \rangle_I^{-\frac{1}{2}}   h^1_I e^k_I \right \rangle_{L^2} \right|^2  \label{eqn:S31}  \\
& = & \sum_{I \in \mathcal{D}} \sum_{k =1}^d  \left | \left \langle f,  \langle W \rangle_I^{-1} 
\widehat{W}(I) \langle W \rangle_I^{-\frac{1}{2}}   h^1_I e^k_I \right \rangle_{L^2}  \right|^2 \nonumber   \\
& =  & \sum_{I \in \mathcal{D}} \sum_{k =1}^d  \left| \left \langle \langle f \rangle_I,  \langle W \rangle_I^{-1} 
\widehat{W}(I) \langle W \rangle_I^{-\frac{1}{2}}  e^k_I \right \rangle_{\mathbb{C}^d} \right |^2 \nonumber \\
& =  & \sum_{I \in \mathcal{D}} \sum_{k =1}^d  \left |\left \langle \langle  W \rangle_I^{-\frac{1}{2}} \langle f \rangle_I,  \langle W \rangle_I^{-\frac{1}{2}} 
\widehat{W}(I) \langle W \rangle_I^{-\frac{1}{2}}  e^k_I \right \rangle_{\mathbb{C}^d} \right |^2.  \label{eqn:S3}
\end{eqnarray}
Now, we can bound $S_3$ as follows:
\[
\begin{aligned}
S_3   
&\le  \sum_{I \in \mathcal{D}} \sum_{k =1}^d  \left \| \langle  W \rangle_I^{-\frac{1}{2}} \langle f 
\rangle_I \right \|^2_{\mathbb{C}^d} \left \|  \langle W \rangle_I^{-\frac{1}{2}} 
\widehat{W}(I) \langle W \rangle_I^{-\frac{1}{2}}  e^k_ I \right \|^2_{\mathbb{C}^d}\\
&\lesssim   \sum_{I \in \mathcal{D}}  \left \| \langle  W \rangle_I^{-\frac{1}{2}} \langle f \rangle_I \right \|^2_{\mathbb{C}^d} \left \|  \langle W \rangle_I^{-\frac{1}{2}} 
\widehat{W}(I) \langle W \rangle_I^{-\frac{1}{2}}  \right \|^2\\
& \lesssim [W]_{A_2} \textnormal{log}\,  [W]_{A_2}  \| f \|^2_{L^2(W^{-1})}, 
\end{aligned}
\]
where we used Theorem $\ref{thm:tv}$ applied to $g = W^{-\frac{1}{2}} f.$ This also implies a similar bound for $S_2$, 
and combining our estimates for
$S_1,S_2,S_3$ completes the proof of Theorem \ref{thm:SquareEst2}. \end{proof}

Using Theorem \ref{thm:SquareEst2}, we can easily prove Theorem \ref{thm:SquareEst1}:

\begin{proof} Again, assume without loss of generality that  $W$ and $W^{-1}$ are bounded and define the constant $B_W$ by
\[ B_W = [ W  ]_{A_2}^2 \textnormal{log}\, [W]_{A_2}= [W]_{A_2}C_W. \]
Using our previous notation, Theorem \ref{thm:SquareEst1} is equivalent to the inequality
\[ \langle D_W f , f \rangle_{L^2} \lesssim B_W \langle M_W f,f \rangle_{L^2}, \quad \forall f \in L^2. \]
We require the following operator inequality
\[ D_W \le [W]_{A_2} \left( D_{W^{-1}} \right)^{-1}. \]
The $A_2$ condition implies that for every $I \in \mathcal{D}$ and vector $e_I \in \mathbb{C}^d$,
\[
\left \langle \langle W \rangle_I^{\frac{1}{2}} \langle W^{-1} \rangle_I^{\frac{1}{2}} e_I, 
 \langle W \rangle_I^{\frac{1}{2}} \langle W^{-1} \rangle_I^{\frac{1}{2}}e_I \right \rangle_{\mathbb{C}^d}
\le [W]_{A_2} \| e_I \|^2_{\mathbb{C}^d}. 
\]
Fixing $g \in L^2$ and setting $e_I = \langle W^{-1} 
\rangle_I^{-\frac{1}{2}} \widehat{g}(I)$, we can conclude
\[
\left \langle \langle W \rangle_I^{\frac{1}{2}}  \widehat{g}(I), 
 \langle W \rangle_I^{\frac{1}{2}} \widehat{g}(I)  \right \rangle_{\mathbb{C}^d}
\le [W]_{A_2} \left \langle \langle W^{-1} 
\rangle_I^{-\frac{1}{2}} \widehat{g}(I),\langle W^{-1} 
\rangle_I^{-\frac{1}{2}} \widehat{g}(I) \right \rangle_{\mathbb{C}^d}. 
\]
Then
\[
\begin{aligned}
\langle D_W g ,g \rangle_{L^2} &= \sum_{I \in \mathcal{D}} \left \langle \langle W \rangle_I^{\frac{1}{2}}  \widehat{g}(I), 
 \langle W \rangle_I^{\frac{1}{2}} \widehat{g}(I)  \right \rangle_{\mathbb{C}^d} \\
& \le  [W]_{A_2} \sum_{I \in \mathcal{D}} \left \langle \langle W^{-1} 
\rangle_I^{-\frac{1}{2}} \widehat{g}(I),\langle W^{-1} 
\rangle_I^{-\frac{1}{2}} \widehat{g}(I) \right \rangle_{\mathbb{C}^d} \\
&=  [W]_{A_2} \langle  \left( D_{W^{-1}} \right)^{-1} g ,g \rangle_{L^2}.
\end{aligned}
\]
Combining that estimate with $(\ref{eqn:est2})$ from Theorem \ref{thm:SquareEst2} applied to $W^{-1}$ gives:
\[ \langle D_W g ,g \rangle_{L^2} \le  [W]_{A_2} \langle  \left( D_{W^{-1}} \right)^{-1} g ,g \rangle_{L^2}
\lesssim [W]_{A_2} C_W \langle M^{-1}_{W^{-1}} g ,g  \rangle_{L^2} = B_W \| g \|_{L^2(W)} \]
for all $g \in L^2,$ which completes the proof. \end{proof}

\begin{remark}[Reducing to Bounded Weights] \label{rem:bounded} \normalfont
The proofs of Theorems  \ref{thm:SquareEst1} and \ref{thm:SquareEst2} only handle weights $W$ with both $W$ and $W^{-1}$ bounded.
To reduce to this case, fix $W \in A_2$ and write 
\[  W(x) =\sum_{j=1}^d \lambda_j(x) P_{E_j(x)} \quad \text{ for } x \in \mathbb{R}, \]
where the $\lambda_j(x)$ are eigenvalues of $W(x)$, the $E_j(x)$ are the associated orthogonal eigenspaces, and the
$P_{E_j(x)}$ are the orthogonal projections onto the $E_j(x).$  Define
\[ \begin{aligned}
E^n_1(x) & \equiv \text{Eigenspaces of $W(x)$ corresponding to eigenvalues } \lambda_j(x) \le \tfrac{1}{n}; \\
E^n_2(x) & \equiv  \text{Eigenspaces of $W(x)$ corresponding to eigenvalues } \tfrac{1}{n} < \lambda_j(x) < n;  \\
E_3^n(x) & \equiv  \text{Eigenspaces of $W(x)$ corresponding to eigenvalues } \lambda_j(x) \ge n. 
\end{aligned}
\]
Using these spaces, truncate $W(x)$ as follows:
\[ W_n(x) = \tfrac{1}{n} P_{E^n_1(x)} + P_{E_2^n(x)} W(x) P_{E^n_2(x)} + n P_{E^n_3(x)}. \]
It is immediate that
\[\left(W_n(x)\right)^{-1} = n P_{E^n_1(x)} + P_{E_2^n(x)} W^{-1}(x) P_{E^n_2(x)} + \tfrac{1}{n} P_{E^n_3(x)}.\]
It is easy to see that $\frac{1}{n} I_{d\times d} \le W_n, W_n^{-1} \le n I_{d \times d}.$ Each $W_n$ is also an $A_2$ weight with
\begin{equation} \label{eqn:A2n}  [W_n]_{A_2} \equiv \sup_{I }  \left \| \langle W_n \rangle_I^{\frac{1}{2}}
 \langle W_n^{-1} \rangle_I^{\frac{1}{2}} \right \|^2 \lesssim [W]_{A_2} ,\end{equation}
where the constant depends on the dimension $d.$ For the scalar case, in \cite{rvv10}, Reznikov-Vasyunin-Volberg show that the constant in \eqref{eqn:A2n} is one. So, it would be interesting to determine the best constant in \eqref{eqn:A2n}.

Our proof of \eqref{eqn:A2n} relies on the following two facts about positive self-adjoint matrices:
\[
\begin{aligned}
& \text{Fact 1:  If $A_1, A_2 \ge 0,$ then $\left \| A_1^{\frac{1}{2}} A_2^{\frac{1}{2}} \right \|^2 \approx \Tr(A_1 A_2) .$} \\
& \text{Fact 2: If $A_1,A_2, B_1,B_2 \ge 0$ and each $A_j \le B_j$,  then $\Tr( A_1 A_2) \le \Tr(B_1 B_2)$.}
\end{aligned}
\]
Here, the implied constants again depend on $d$.
Fact $1$ allows us to equate $ \left \| \langle W_n \rangle_I^{\frac{1}{2}}
 \langle W_n^{-1} \rangle_I^{\frac{1}{2}} \right \|^2  \approx \Tr(\left \langle W_n \right \rangle_I \left \langle W^{-1}_n \right \rangle_I)$.  
 Then, using Fact $2$ and the matrix inequalities
\[ 
\begin{aligned}
 & \left \langle W_n   \right \rangle_I \le  n I_{d \times d}; \\
 & \left \langle P_{E_2^n(x)} W(x) P_{E^n_2(x)} + n P_{E^n_3(x)}  \right \rangle_I \le \left \langle W \right \rangle_I,
 \end{aligned}
 \]
 for $W_n$ and similar ones for $W^{-1}_n$, one can deduce that
 \[ \Tr(\left \langle W_n \right \rangle_I \left \langle W^{-1}_n \right \rangle_I) \le 2 \Tr(I_{d\times d}) + \Tr(\left \langle W \right \rangle_I \left \langle W^{-1}\right \rangle_I ). \]
 Applying Fact $1$ and using $1 \le [W]_{A_2}$ immediately  gives \eqref{eqn:A2n}. Then, as $W_n$ and $W_n^{-1}$ are bounded, the arguments in the proof of Theorem \ref{thm:SquareEst2} imply that
 \begin{equation} \label{eqn:bdd3.3}
\| f \|^2_{L^2(W_n)} \lesssim  [W_n]_{A_2} \textnormal{log}\, [W_n]_{A_2}  \
\sum_{I \in \mathcal{D}} \left \langle \langle W_n \rangle_I \widehat{f}(I),
\widehat{f}(I) \right \rangle_{\mathbb{C}^d} \quad \forall f \in L^2(W_n). 
\end{equation}
 Using basic convergence theorems, we will see that both
 \begin{equation}  \label{eqn:lim1} \lim_{n \rightarrow \infty} \| f \|^2_{L^2(W_n)} = \| f \|^2_{L^2(W)} \end{equation}
 and
\begin{equation}  \label{eqn:lim2} \lim_{n \rightarrow \infty}  \sum_{I \in \mathcal{D}} \left \langle \langle W_n \rangle_I \widehat{f}(I),
\widehat{f}(I) \right \rangle_{\mathbb{C}^d} = \sum_{I \in \mathcal{D}} \left \langle \langle W \rangle_I \widehat{f}(I),
\widehat{f}(I) \right \rangle_{\mathbb{C}^d}, \end{equation}
for $f \in L^2 \cap L^2(W).$ In fact, to obtain the first inequality, observe that 
\[ \left \langle W_n(x)f(x), f(x) \right \rangle_{\mathbb{C}^d} \le \left \langle f(x) +W(x)f(x), f(x) \right \rangle_{\mathbb{C}^d}, \qquad \forall n \in \mathbb{N}.\]
 Since the right-hand function is integrable, the Dominated Convergence Theorem implies that
\[  \lim_{n \rightarrow \infty} \| f \|^2_{L^2(W_n)} = \int_{\mathbb{R}}  \lim_{n\rightarrow \infty} \left \langle W_n(x) f(x), f(x) \right \rangle_{\mathbb{C}^d} dx = \| f\|_{L^2(W)}^2.\]
To obtain \eqref{eqn:lim2}, first observe that
\[ 
\begin{aligned} 
\sum_{I \in \mathcal{D}}  \left \langle \langle W_n \rangle_I \widehat{f}(I),  \widehat{f}(I) \right \rangle_{\mathbb{C}^d}
&= \frac{1}{n} \sum_{I \in \mathcal{D}}  \left \langle \langle P_{E^n_1(x)} \rangle_I \widehat{f}(I),\widehat{f}(I) \right \rangle_{\mathbb{C}^d} \\
 &+ \sum_{I \in \mathcal{D}}   \left \langle \langle P_{E^n_2(x)} W(x) P_{E^n_2(x)} + nP_{E^n_3(x)} \rangle_I \widehat{f}(I), \widehat{f}(I) \right \rangle_{\mathbb{C}^d}.
 \end{aligned}
 \]
The first term clearly goes to zero as $n \rightarrow \infty.$ Meanwhile, the terms in the second sum are increasing in $n$. So, we can apply the Monotone Convergence Theorem twice to conclude
\[
\begin{aligned}
\lim_{n\rightarrow \infty} \sum_{I \in \mathcal{D}}  \left \langle \langle W_n \rangle_I \widehat{f}(I), \widehat{f}(I) \right \rangle_{\mathbb{C}^d} &= \sum_{I \in \mathcal{D}}  \lim_{n\rightarrow \infty}  \left \langle \langle P_{E^n_2(x)} W(x) P_{E^n_2(x)} + nP_{E^n_3(x)} \rangle_I \widehat{f}(I), \widehat{f}(I) \right \rangle_{\mathbb{C}^d} \\
& =  \sum_{I \in \mathcal{D}}   \left \langle \langle  \lim_{n\rightarrow \infty} P_{E^n_2(x)} W(x) P_{E^n_2(x)} + nP_{E^n_3(x)} \rangle_I \widehat{f}(I), \widehat{f}(I) \right \rangle_{\mathbb{C}^d} \\
& =  \sum_{I \in \mathcal{D}}  \left \langle \langle W \rangle_I \widehat{f}(I), \widehat{f}(I) \right \rangle_{\mathbb{C}^d}. 
\end{aligned}
\]
Now, letting $n \rightarrow \infty$ in \eqref{eqn:bdd3.3} and using  \eqref{eqn:A2n}, \eqref{eqn:lim1}, and\eqref{eqn:lim2} gives Theorem \ref{thm:SquareEst2} for general $W$, since $L^2(W) \cap L^2$ is dense in $L^2(W).$  Theorem \ref{thm:SquareEst1} follows similarly.
\end{remark}

\section{The Hilbert Transform} \label{sec:ht}

The bounds given in Theorems \ref{thm:SquareEst1} and \ref{thm:SquareEst2} imply similar bounds for the Hilbert transform on $L^2(W)$.
To see this, fix $\alpha \in \mathbb{R}$ and $r>0.$ The densely-defined shift operator $\Sha^{\alpha, r}$ on $L^2(\mathbb{R}, \mathbb{C})$  is given by
\[ \Sha^{\alpha, r} f \equiv \frac{1}{\sqrt{2}} \sum_{I \in \mathcal{D}^{\alpha,r}}\widehat{f}(I) \left( h_{I_-} - h_{I_+} \right). \]
In \cite{pet00}, Petermichl showed that
the Hilbert transform $H$ on $L^2(\mathbb{R}, \mathbb{C})$ is basically an average of these dyadic shifts. Specifically, there is
a constant $c$ and $L^{\infty}(\mathbb{R}, \mathbb{C})$ function $b$ such that $H = cT + M_b,$ where $T$ is in the weak operator 
closure of the convex hull of the set $\{ \Sha^{\alpha,r}\}_{\alpha, r}$  in  $\mathcal{L}(L^2(\mathbb{R}, \mathbb{C}))$ and $M_b$ is multiplication by $b$. 
The Hilbert transform on $L^2(\mathbb{R}, \mathbb{C}^d)$, also denoted $H$, is the scalar Hilbert transform applied component-wise. The dyadic 
shift operators $\Sha^{\alpha, r}$ on $L^2(\mathbb{R}, \mathbb{C}^d)$ are similarly defined by
\[\Sha^{\alpha, r} f \equiv \frac{1}{\sqrt{2}} \sum_{I \in \mathcal{D}^{\alpha,r}}\widehat{f}(I) \left( h_{I_-} - h_{I_+} \right), \]
which is the same as applying the scalar  $\Sha^{\alpha, r}$ shifts component-wise.
Using the scalar-result, the Hilbert transform $H$ on $L^2(\mathbb{R}, \mathbb{C}^d)$ satisfies
$H = c \widetilde{T} + M_b$ where $\widetilde{T}$ is $T$ applied component-wise and so, is in the weak operator 
closure of the convex hull of the set $\{\Sha^{\alpha,r}\}_{\alpha, r}$  in  $\mathcal{L}(L^2(\mathbb{R}, \mathbb{C}^d))$.

In \cite{vt97}, Treil and Volberg showed
that for $A_2$ weights $W$,  the Hilbert transform is bounded on $L^2(W)$, 
 but they did not track the dependence on the $A_2$ characteristic $[W]_{A_2}.$
In contrast, using our square function estimates, we are able to establish the following: 

\begin{theorem} \label{thm:ht} Let $W$ be a  $d \times d$  matrix weight in $A_2.$ Then  
\[ \| H  f \|_{L^2(W)} \lesssim [W]_{A_2}^{\frac{3}{2}} \textnormal{log}\, [W]_{A_2}  \| f \|_{L^2(W)} \qquad \forall  f\in L^2(W). \]
\end{theorem}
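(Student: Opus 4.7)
The plan is to first establish the analogous bound for each dyadic shift $\Sha^{\alpha, r}$, with a constant uniform in $\alpha \in \mathbb{R}$ and $r > 0$, and then transfer it to $H$ via Petermichl's representation $H = c\widetilde{T} + M_b$ recalled just above the theorem. Since $M_b$ is bounded on $L^2(W)$ with norm $\|b\|_\infty$ regardless of $W$, and the uniform-in-$(\alpha, r)$ bound on shifts passes to their convex combinations and then to the weak operator closure, the theorem reduces to a uniform dyadic estimate.

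The key observation driving the shift estimate is that the generalized square function is invariant under $\Sha^{\alpha, r}$: one has $\|S_W(\Sha^{\alpha, r} f)\|_{L^2(\mathbb{R}, \mathbb{R})} = \|S_W f\|_{L^2(\mathbb{R}, \mathbb{R})}$, with $S_W$ adapted to $\mathcal{D}^{\alpha,r}$. Since the Haar functions at distinct dyadic intervals are orthogonal, the Haar coefficients of $\Sha^{\alpha,r} f$ are $\tfrac{1}{\sqrt{2}}\widehat{f}(I)$ at $I_-$ and $-\tfrac{1}{\sqrt{2}}\widehat{f}(I)$ at $I_+$, so
\[
\|S_W(\Sha^{\alpha,r} f)\|^2_{L^2(\mathbb{R},\mathbb{R})} = \sum_{I \in \mathcal{D}^{\alpha,r}} \tfrac{1}{2}\left\langle \left(\langle W \rangle_{I_-} + \langle W \rangle_{I_+}\right) \widehat{f}(I), \widehat{f}(I)\right\rangle_{\mathbb{C}^d} = \sum_{I \in \mathcal{D}^{\alpha,r}} \left\langle \langle W \rangle_I \widehat{f}(I), \widehat{f}(I) \right\rangle_{\mathbb{C}^d},
\]
using the martingale identity $\langle W \rangle_I = \tfrac12(\langle W \rangle_{I_-}+\langle W\rangle_{I_+})$.

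Chaining Theorem \ref{thm:SquareEst2} applied to $\Sha^{\alpha,r} f$, this invariance, and Theorem \ref{thm:SquareEst1} then yields
\begin{align*}
\|\Sha^{\alpha,r} f\|_{L^2(W)}^2
&\lesssim [W]_{A_2} \log [W]_{A_2}\, \|S_W(\Sha^{\alpha,r} f)\|_{L^2(\mathbb{R},\mathbb{R})}^2\\
&= [W]_{A_2} \log [W]_{A_2}\, \|S_W f\|_{L^2(\mathbb{R},\mathbb{R})}^2\\
&\lesssim [W]_{A_2}^3 \left(\log [W]_{A_2}\right)^2\, \|f\|_{L^2(W)}^2,
\end{align*}
i.e.\ the desired uniform bound $\|\Sha^{\alpha,r} f\|_{L^2(W)} \lesssim [W]_{A_2}^{3/2} \log [W]_{A_2}\,\|f\|_{L^2(W)}$.

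I do not expect any serious obstacle here: the main analytic content has already been packaged into Theorems \ref{thm:SquareEst1} and \ref{thm:SquareEst2}, and the invariance of the square function under $\Sha^{\alpha,r}$ reduces to a one-line martingale calculation. The only minor technical point is to confirm that the two square function estimates hold for an arbitrary shifted grid $\mathcal{D}^{\alpha,r}$ with the same implicit constants, which the preamble to Section \ref{sec:defn} already affirms since their proofs depend only on the dyadic tree structure, not on the particular translate or dilate chosen.
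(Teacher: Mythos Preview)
Your proposal is correct and follows essentially the same approach as the paper: you prove invariance of $\|S_W(\cdot)\|_{L^2(\mathbb{R},\mathbb{R})}$ under $\Sha^{\alpha,r}$ via the identity $\langle W\rangle_I=\tfrac12(\langle W\rangle_{I_-}+\langle W\rangle_{I_+})$, chain Theorems~\ref{thm:SquareEst2} and~\ref{thm:SquareEst1} to bound each shift, and then transfer to $H$ through Petermichl's representation $H=c\widetilde{T}+M_b$. The only cosmetic difference is that the paper organizes the invariance computation by reindexing over parents~$\tilde{I}$ rather than reading off the Haar coefficients at $I_\pm$ directly.
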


\begin{proof} As before, we omit the $\alpha, r$ notation. 
Observe that the square function norm in Theorems \ref{thm:SquareEst1} and \ref{thm:SquareEst2}
is not affected by dyadic shifts. Specifically, let $\tilde{I}$ denote the parent of $I$ in the dyadic grid. Then
\[
\begin{aligned}
\| S_W \Sha f \|_{L^2(\mathbb{R}, \mathbb{R})}^2 &= 
\sum_{I \in \mathcal{D}} \left \langle \langle W \rangle_I \widehat{\Sha f}(I),
\widehat{ \Sha f}(I) \right \rangle_{\mathbb{C}^d}  \\
& = 
\frac{1}{2} \sum_{I \in \mathcal{D}} \left \langle \langle W \rangle_I \widehat{f}(\tilde{I}),
 \widehat{f}(\tilde{I}) \right \rangle_{\mathbb{C}^d} \\
 & = \sum_{I \in \mathcal{D}} \left \langle  \frac{1}{2} \left( \langle W \rangle_{I_-} +  \langle W \rangle_{I_+} \right)  \widehat{f}(I),
 \widehat{f}(I) \right \rangle_{\mathbb{C}^d} \\
 & = \sum_{I \in \mathcal{D}} \left \langle  \langle W \rangle_I  \widehat{f}(I),
 \widehat{f}(I) \right \rangle_{\mathbb{C}^d} \\
 & = \|S_W f \|_{L^2(\mathbb{R}, \mathbb{R})}^2.
\end{aligned}
\]
 Now, using Theorems \ref{thm:SquareEst1} and \ref{thm:SquareEst2}, we have
\[
\begin{aligned}
\| \Sha f \|^2_{L^2(W)}  &\lesssim [W]_{A_2} \textnormal{log}\, [W]_{A_2} \| S_W \Sha f \|_{L^2(\mathbb{R}, \mathbb{R})}^2 \\
&=   [W]_{A_2} \textnormal{log}\, [W]_{A_2}  \|S_W f \|_{L^2(\mathbb{R}, \mathbb{R})}^2 \\
 &\lesssim [W]^3_{A_2} ( \textnormal{log}\, [W]_{A_2})^2 \| f \|^2_{L^2(W)}.
\end{aligned}
\]
The formula for $H$ in terms of dyadic shifts implies that
\[ \| H f \|^2_{L^2(W)} \lesssim \sup_{\alpha, r} \| \Sha^{\alpha, r} f \|^2_{L^2(W)}  + \| b \|^2_{\infty} 
\| f \|^2_{L^2(W)} \lesssim [W]^3_{A_2} ( \textnormal{log}\, [W]_{A_2})^2 \| f \|^2_{L^2(W)},
 \]
as desired.
\end{proof}

\section{Haar Multipliers}\label{sec:hm}

The arguments above extend easily to Haar multipliers. To begin, let $\sigma=\{\sigma_I\}_{I\in\mathcal{D}}$ be a sequence of $d \times d$ matrices and recall the Haar multiplier $T_{\sigma}$ defined 
by
$$
T_{\sigma}f \equiv \sum_{I\in\mathcal{D}} \sigma_I \widehat{f}(I) h_I.
$$
To obtain boundedness on $L^2(W)$, it is crucial that the matrices $\sigma_I$ interact well with $W$. To be precise, fix a weight $W \in A_2$ and define
$$\left\Vert \sigma \right\Vert_{\infty} \equiv\inf\left\{C: \left\langle W\right\rangle_I^{-\frac{1}{2}}\sigma_I^{*}\left\langle W\right\rangle_I\sigma_I\left\langle W\right\rangle_I^{-\frac{1}{2}}\leq C^2I_{d\times d} \quad\forall I\in\mathcal{D}\right\}.
$$
Equivalently, we could define $\left\Vert \sigma\right\Vert_{\infty}=\sup_{I\in\mathcal{D}} \big\Vert \left\langle W\right\rangle_I^{\frac{1}{2}}\sigma_I \left\langle W\right\rangle_I^{-\frac{1}{2}}\big\Vert$.  Then, a variant of the following result is established by Isralowitz-Kwon-Pott in \cite{IKP}: 
\begin{theorem}
Let $W\in A_2$ and $\sigma=\{\sigma_I\}_{I\in\mathcal{D}}$ a sequence of matrices.  Then the Haar multiplier $T_{\sigma}$
is bounded on $L^2(W)$ if and only if $\| \sigma \|_{\infty} <\infty$.
\end{theorem}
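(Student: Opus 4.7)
The plan is to prove both implications, following the same blueprint used for the Hilbert transform in Section \ref{sec:ht}: the sufficient direction exploits the fact that the square function norm $\|S_W \cdot\|_{L^2(\mathbb{R},\mathbb{R})}$ interacts cleanly with $T_\sigma$, while the necessary direction comes from testing $T_\sigma$ on single vector-valued Haar functions.

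For the sufficient direction, assume $\|\sigma\|_\infty < \infty$ and observe first that the definition of $\|\sigma\|_\infty$ is equivalent to the matrix inequality $\sigma_I^* \langle W \rangle_I \sigma_I \le \|\sigma\|_\infty^2 \langle W \rangle_I$ for every $I \in \mathcal{D}$. Computing directly from the definition of $S_W$,
\[
\|S_W T_\sigma f\|_{L^2(\mathbb{R},\mathbb{R})}^2
= \sum_{I\in\mathcal{D}} \left\langle \langle W\rangle_I \sigma_I \widehat{f}(I), \sigma_I \widehat{f}(I)\right\rangle_{\mathbb{C}^d}
\le \|\sigma\|_\infty^2 \sum_{I\in\mathcal{D}} \left\langle \langle W\rangle_I \widehat{f}(I), \widehat{f}(I)\right\rangle_{\mathbb{C}^d}
= \|\sigma\|_\infty^2 \|S_W f\|_{L^2(\mathbb{R},\mathbb{R})}^2.
\]
Now chain Theorem \ref{thm:SquareEst2} (to pass from $\|T_\sigma f\|_{L^2(W)}$ up to the square function) with the bound just proved and Theorem \ref{thm:SquareEst1} (to pass back down to $\|f\|_{L^2(W)}$). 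This yields
\[
\|T_\sigma f\|^2_{L^2(W)} \lesssim \|\sigma\|_\infty^2 \, [W]_{A_2}^3 \bigl(\log[W]_{A_2}\bigr)^2 \, \|f\|^2_{L^2(W)},
\]
which gives boundedness with the quantitative estimate advertised in the introduction.

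For the necessary direction, assume $T_\sigma$ is bounded on $L^2(W)$ with norm $C$. For a fixed $I \in \mathcal{D}$ and vector $v \in \mathbb{C}^d$, test $T_\sigma$ on the vector-valued Haar function $f = h_I v$. Since $\widehat{f}(J) = \delta_{IJ} v$, one has $T_\sigma f = (\sigma_I v) h_I$, and the identity $h_I^2 = |I|^{-1}\mathbf{1}_I$ gives
\[
\|T_\sigma f\|^2_{L^2(W)} = \left\langle \langle W\rangle_I \sigma_I v, \sigma_I v\right\rangle_{\mathbb{C}^d},
\qquad
\|f\|^2_{L^2(W)} = \left\langle \langle W\rangle_I v, v\right\rangle_{\mathbb{C}^d}.
\]
The boundedness hypothesis therefore forces $\sigma_I^* \langle W\rangle_I \sigma_I \le C^2 \langle W\rangle_I$ for every $I$, which on conjugating by $\langle W\rangle_I^{-1/2}$ says precisely that $\|\sigma\|_\infty \le C$.

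The routine part is the necessary direction; the nontrivial content is concentrated in the sufficient direction, where the only real obstruction is that we need the square function norm to dominate and be dominated by the weighted norm. Those two facts are exactly Theorems \ref{thm:SquareEst1} and \ref{thm:SquareEst2}, so once we verify the matrix inequality $\sigma_I^*\langle W\rangle_I\sigma_I \le \|\sigma\|_\infty^2 \langle W\rangle_I$ from the definition (a short manipulation of the defining condition), the proof telescopes. No new obstacles arise beyond those already handled in the Hilbert transform argument.
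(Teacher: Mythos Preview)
Your proof is correct and follows essentially the same approach as the paper's proof of Theorem \ref{thm:HaarMult}: the sufficient direction sandwiches the square-function comparison $\|S_W T_\sigma f\|^2 \le \|\sigma\|_\infty^2 \|S_W f\|^2$ between Theorems \ref{thm:SquareEst1} and \ref{thm:SquareEst2}, and the necessary direction tests on single Haar functions. The only cosmetic difference is that the paper tests on $f = \langle W\rangle_I^{-1/2} h_I e$ rather than $f = h_I v$, which amounts to the same thing after your conjugation step.
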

Here, we have translated their result to the notation of this paper. Now, we provide a new and simpler proof of this boundedness result for $p=2.$ Using our previous arguments, we are also able to track the dependence on $[W]_{A_2}.$

\begin{theorem} \label{thm:HaarMult}
Let $W$ be a  $d \times d$ matrix weight in $A_2$ and let $\sigma=\{\sigma_I\}_{I\in\mathcal{D}}$ be a sequence of $d \times d$ matrices. Then $T_{\sigma}$ is bounded on $L^2(W)$ if and only if $\| \sigma \|_{\infty} < \infty$. Moreover,
$$
\left\Vert T_{\sigma} f\right\Vert_{L^2(W)} \lesssim [W]^{\frac{3}{2}}_{A_2}  \textnormal{log}\, [W]_{A_2} \left\Vert \sigma\right\Vert_{\infty} \| f \|_{L^2(W)} .
$$
\end{theorem}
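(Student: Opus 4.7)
The plan is to mimic the strategy used for the Hilbert transform in Theorem \ref{thm:ht}, exploiting the fact that the action of $T_\sigma$ on Haar coefficients is simply multiplication by $\sigma_I$. This makes the square function norm $\|S_W T_\sigma f\|_{L^2(\mathbb{R},\mathbb{R})}$ transparent to analyze: since $\widehat{T_\sigma f}(I)=\sigma_I\widehat{f}(I)$, we have
\[
\|S_W T_\sigma f\|_{L^2(\mathbb{R},\mathbb{R})}^2 = \sum_{I\in\mathcal{D}}\bigl\langle \sigma_I^{*}\langle W\rangle_I \sigma_I\,\widehat{f}(I),\widehat{f}(I)\bigr\rangle_{\mathbb{C}^d}.
\]
The definition $\|\sigma\|_\infty^2 = \sup_I \|\langle W\rangle_I^{1/2}\sigma_I \langle W\rangle_I^{-1/2}\|^2$ is exactly equivalent to the operator inequality $\sigma_I^{*}\langle W\rangle_I \sigma_I \le \|\sigma\|_\infty^2\,\langle W\rangle_I$ for every $I$, so the above sum is at most $\|\sigma\|_\infty^2\,\|S_W f\|_{L^2(\mathbb{R},\mathbb{R})}^2$.

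For sufficiency, I would then sandwich $T_\sigma f$ between the two square function estimates. First, apply Theorem \ref{thm:SquareEst2} to $T_\sigma f$:
\[
\|T_\sigma f\|_{L^2(W)}^2 \lesssim [W]_{A_2}\log[W]_{A_2}\,\|S_W T_\sigma f\|_{L^2(\mathbb{R},\mathbb{R})}^2.
\]
Next, use the pointwise domination established above to replace $S_W T_\sigma f$ by $\|\sigma\|_\infty S_W f$, and finally apply Theorem \ref{thm:SquareEst1} to obtain $\|S_W f\|_{L^2(\mathbb{R},\mathbb{R})}^2 \lesssim [W]_{A_2}^{2}\log[W]_{A_2}\,\|f\|_{L^2(W)}^2$. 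Multiplying these three estimates gives $\|T_\sigma f\|_{L^2(W)}^2 \lesssim [W]_{A_2}^{3}(\log[W]_{A_2})^2\,\|\sigma\|_\infty^2\,\|f\|_{L^2(W)}^2$, which is precisely the asserted bound after taking square roots.

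For necessity, I would test the boundedness of $T_\sigma$ on the simplest possible inputs, namely $f = h_I v$ for an arbitrary interval $I\in\mathcal{D}$ and vector $v\in\mathbb{C}^d$. A direct computation gives $T_\sigma f = (\sigma_I v)\,h_I$, so
\[
\|T_\sigma f\|_{L^2(W)}^2 = \bigl\langle\langle W\rangle_I\sigma_I v,\sigma_I v\bigr\rangle_{\mathbb{C}^d}, \qquad \|f\|_{L^2(W)}^2 = \bigl\langle\langle W\rangle_I v,v\bigr\rangle_{\mathbb{C}^d}.
\]
Setting $v = \langle W\rangle_I^{-1/2} u$ converts the operator inequality coming from boundedness of $T_\sigma$ into $\langle W\rangle_I^{-1/2}\sigma_I^{*}\langle W\rangle_I\sigma_I\langle W\rangle_I^{-1/2} \le \|T_\sigma\|_{L^2(W)\to L^2(W)}^2\,I_{d\times d}$ for every $I$, which is exactly $\|\sigma\|_\infty \le \|T_\sigma\|_{L^2(W)\to L^2(W)}$.

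The main obstacle is conceptually very mild: all the heavy lifting is contained in Theorems \ref{thm:SquareEst1} and \ref{thm:SquareEst2}, together with the observation that the matrix inequality defining $\|\sigma\|_\infty$ is perfectly tailored to the Haar-by-Haar action of $T_\sigma$. The only place where one must be a little careful is the reduction to bounded weights (so that the Haar expansions and square function definitions are justified without convergence ambiguities); this reduction is handled exactly as in Remark \ref{rem:bounded}, since the inequality being proved is linear in the weight estimates and the relevant truncations preserve $[W]_{A_2}$ up to a dimensional constant.
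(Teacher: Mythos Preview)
Your proposal is correct and follows essentially the same approach as the paper's own proof: necessity by testing on $f=\langle W\rangle_I^{-1/2}h_I e$, and sufficiency by showing $\|S_W T_\sigma f\|_{L^2}^2 \le \|\sigma\|_\infty^2\,\|S_W f\|_{L^2}^2$ and then sandwiching between Theorems \ref{thm:SquareEst2} and \ref{thm:SquareEst1}. The only cosmetic difference is the order of presentation and your explicit remark about the bounded-weight reduction, which the paper leaves implicit here.
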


\begin{proof} Necessity is almost immediate. Fix $I\in\mathcal{D}$ and $e\in\mathbb{C}^d$ and simply set $f \equiv  \langle W  \rangle_I^{-\frac{1}{2}} h_I e.$  Then simple computations prove that $T_{\sigma} f =  \sigma_I  \langle W \rangle_I^{-\frac{1}{2}} h_Ie$ and the following norm equalities:
\[
\begin{aligned}
 \| f\|_{L^2(W)}^2=\| \langle W  \rangle_I^{-\frac{1}{2}} h_Ie \|_{L^2(W)}^2 & =\Vert e\Vert_{\mathbb{C}^d}^2;\\
 \left\Vert T_{\sigma}f\right\Vert_{L^2(W)}^2=\left\Vert \sigma_I  \langle W \rangle_I^{-\frac{1}{2}} h_Ie\right\Vert_{L^2(W)}^2 & =\left\langle \left\langle W\right\rangle_I^{-\frac{1}{2}}\sigma_I^{*}\left\langle W\right\rangle_I\sigma_I\left\langle W\right\rangle_I^{-\frac{1}{2}} e,e\right\rangle_{\mathbb{C}^d}.   
\end{aligned}
 \]
 Assuming $T_{\sigma}$ is bounded on $L^2(W),$ we can then conclude:
$$
\left\langle \left\langle W\right\rangle_I^{-\frac{1}{2}}\sigma_I^{*}\left\langle W\right\rangle_I\sigma_I\left\langle W\right\rangle_I^{-\frac{1}{2}}e,e\right\rangle_{\mathbb{C}^d}= \| T_{\sigma} f \|_{L^2(W)}^2 \leq \left\Vert T_{\sigma}\right\Vert_{L^2(W)\to L^2(W)}^2\left\Vert e\right\Vert_{\mathbb{C}^d}^2.
$$
Since $e\in\mathbb{C}^d$ and $I\in\mathcal{D}$ was arbitrary we have that $\| \sigma \|_{\infty} < \infty$.

The proof of sufficiency, with the desired constant, is largely a repetition of computations from earlier in the paper.  
As before, observe that the square function in Theorems \ref{thm:SquareEst1} and \ref{thm:SquareEst2}
interacts well with Haar multipliers. Specifically, 
\[
\begin{aligned}
\| S_W T_{\sigma} f\|_{L^2(\mathbb{R}, \mathbb{R})}^2  &= 
\sum_{I \in \mathcal{D}} \left \langle \langle W \rangle_I \widehat{T_{\sigma} f}(I),
\widehat{ T_{\sigma} f}(I) \right \rangle_{\mathbb{C}^d} \\
 & = \sum_{I \in \mathcal{D}} \left \langle  \langle W \rangle_I  \sigma_I \widehat{f}(I),
 \sigma_I \widehat{f}(I) \right \rangle_{\mathbb{C}^d}\\
& = \sum_{I \in \mathcal{D}} \left \langle  \langle W \rangle_I  \sigma_I \langle W \rangle_I^{-\frac{1}{2}} \langle W \rangle_I^{\frac{1}{2}}\widehat{f}(I),
 \sigma_I  \langle W \rangle_I^{-\frac{1}{2}} \langle W \rangle_I^{\frac{1}{2}}\widehat{f}(I) \right \rangle_{\mathbb{C}^d}\\
 & \leq  \left\Vert\sigma\right\Vert_{\infty}^2 \sum_{I \in \mathcal{D}} \left \langle   \langle W \rangle_I \widehat{f}(I), \widehat{f}(I) \right \rangle_{\mathbb{C}^d} \\
 & =   \left\Vert\sigma\right\Vert_{\infty}^2 \| S_W f \|_{L^2(\mathbb{R}, \mathbb{R})}^2.
\end{aligned}
\]
Simple applications of Theorems \ref{thm:SquareEst1} and \ref{thm:SquareEst2} then yield
\[
\begin{aligned}
\| T_{\sigma} f \|^2_{L^2(W)}  &\lesssim [W]_{A_2} \textnormal{log}\, [W]_{A_2}  \| S_W T_{\sigma} f\|_{L^2(\mathbb{R}, \mathbb{R})}^2
 \\
& \le   [W]_{A_2} \textnormal{log}\, [W]_{A_2} \left\Vert\sigma\right\Vert_{\infty}^2 \| S_W f \|_{L^2(\mathbb{R}, \mathbb{R})}^2 \\ 
 &\lesssim [W]^3_{A_2} ( \textnormal{log}\, [W]_{A_2})^2 \left\Vert \sigma\right\Vert_{\infty}^2 \| f \|^2_{L^2(W)},
\end{aligned}
\]
which gives the desired bound.
\end{proof}

\begin{remark}  \normalfont One should observe that the arguments in Theorems \ref{thm:ht} and \ref{thm:HaarMult} rest on a good relationship between the operator in question and the square function $S_W.$  Specifically, in Theorem  \ref{thm:ht}, the family of dyadic shifts $\Sha^{\alpha, r}$ interacts well with $S_W$ and by taking averages of them, one can recover the Hilbert transform. It is not hard to show that $S_W$ also interacts well with other similarly-nice dyadic shifts, which one could use to build other Calder\'on-Zygmund operators and obtain similar estimates.  
\end{remark}

\section{Open Questions} \label{sec:open}

\subsection{Square Function Estimates}

If $w$ is a scalar-valued $A_2$ weight, then Theorem \ref{thm:SquareEst1} is true with $[w]^2_{A_2}$ replacing $[w]^2_{A_2} \log [w]_{A_2}$.
This motivates the following conjecture:

\begin{conjecture} \label{con:SquareEst1} Let $W$ be a  $d \times d$ 
matrix weight in $A_2.$ Then 
\[
\| S_W f \|_{L^2(\mathbb{R}, \mathbb{R})}^2 \lesssim 
[ W  ]_{A_2}^2 \| f \|^2_{L^2(W)} \quad \forall f \in L^2(W). 
\]
\end{conjecture}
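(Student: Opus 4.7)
The plan is to run the same two-step strategy used in the proof of Theorem~\ref{thm:SquareEst1}: first improve the lower bound of Theorem~\ref{thm:SquareEst2} to the log-free estimate
\[
\| f \|^2_{L^2(W)} \lesssim [W]_{A_2}\, \| S_W f \|^2_{L^2(\mathbb{R}, \mathbb{R})}, \qquad \forall f \in L^2(W),
\]
and then convert it into the desired upper bound via the $A_2$ operator inequality $D_W \le [W]_{A_2}(D_{W^{-1}})^{-1}$ exactly as in the derivation of Theorem~\ref{thm:SquareEst1}. Because that derivation preserves the quantitative dependence on the $A_2$ characteristic, the log-free lower bound applied to $W^{-1}$ would immediately yield $\| S_W f \|^2_{L^2(\mathbb{R}, \mathbb{R})} \lesssim [W]_{A_2}^2\, \|f\|^2_{L^2(W)}$, which is the conjecture. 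Thus the entire content to be established is the log-free version of Theorem~\ref{thm:SquareEst2}.

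To prove it, I would repeat the argument of Theorem~\ref{thm:SquareEst2} verbatim. After reducing to bounded $W$ and $W^{-1}$ via Remark~\ref{rem:bounded}, passing to the inverse inequality \eqref{eqn:est2}, and expanding the standard Haar basis in disbalanced form via \eqref{eqn:disHaar}, one obtains the familiar decomposition $S_1 + S_2 + S_3$. The terms $S_1$ and $S_2$ are controlled exactly as in the current proof without any logarithmic loss, so the improvement must come entirely from the main term $S_3$ in \eqref{eqn:S3}. Concretely, what is needed is the sharp (log-free) matrix Carleson embedding estimate
\[
\sum_{I \in \mathcal{D}} |I|\, \Big\|\langle W\rangle_I^{-\frac{1}{2}}(\langle W\rangle_{I_-}-\langle W\rangle_{I_+})\langle W\rangle_I^{-\frac{1}{2}}\Big\|^{2} \Big\|\langle W\rangle_I^{-\frac{1}{2}}\langle W^{\frac{1}{2}}g\rangle_I\Big\|^{2} \lesssim [W]_{A_2}\, \|g\|_{L^2}^{2},
\]
for all $g \in L^2$, which is the content of Theorem~\ref{thm:tv} with the logarithm removed.

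The natural tool is the recent sharp matrix weighted Carleson embedding theorem of Culiuc--Treil \cite{ct15}, which delivers exactly this type of bound provided that the underlying sequence of positive matrices satisfies an appropriate \emph{testing condition}. In our setting the relevant sequence is
\[
B_I \;\equiv\; |I|\,\langle W\rangle_I^{-\frac{1}{2}}\,\widehat{W}(I)\,\langle W\rangle_I^{-1}\,\widehat{W}(I)\,\langle W\rangle_I^{-\frac{1}{2}}, \qquad I \in \mathcal{D},
\]
and the testing condition amounts to verifying, for every $J \in \mathcal{D}$ and every $e \in \mathbb{C}^d$, a localized bound of the form
\[
\sum_{I \subseteq J} \big\langle B_I\, \mathbf{P}_{I,J}(e),\,\mathbf{P}_{I,J}(e)\big\rangle_{\mathbb{C}^d} \;\lesssim\; [W]_{A_2}\, |J|\,\|e\|_{\mathbb{C}^d}^{2},
\]
where $\mathbf{P}_{I,J}$ is a suitable projection or averaging operator adapted to $\langle W\rangle_J$ in the sense required by \cite{ct15}.

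The main obstacle is precisely the verification of this testing condition with the sharp constant $[W]_{A_2}$, which the authors themselves flag as elusive in Section~\ref{sec:open}. In the scalar case the analogous condition collapses by a simple telescoping over nested intervals and reduces directly to the $A_2$ condition; in the matrix setting, the non-commutativity of $\langle W\rangle_I^{-1}$ and $\widehat{W}(I)$ across nested $I$ destroys any telescoping identity, and any crude norm estimate costs either a factor depending on $d$ or an additional logarithm. Accordingly, my proposal is best described conditionally: once a sharp testing estimate for the family $\{B_I\}$ is proved, Culiuc--Treil's theorem combined with the route above produces the log-free lower square function bound and hence Conjecture~\ref{con:SquareEst1}; establishing that sharp testing estimate is where the entire difficulty resides and will certainly require genuinely new ideas beyond the techniques currently available in the matrix $A_2$ literature.
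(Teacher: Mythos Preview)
Your proposal is not a proof but a conditional strategy, and it coincides almost exactly with the paper's own discussion of this conjecture in Section~\ref{sec:open}. The paper does not prove Conjecture~\ref{con:SquareEst1}; it explains, just as you do, that the only missing ingredient in the argument of Theorem~\ref{thm:SquareEst2} is a log-free bound on $S_3$, that the Culiuc--Treil embedding theorem (Theorem~\ref{thm:CETnew}) is the natural tool, and that the obstruction is the unverified testing condition for the sequence $A_I = \langle W\rangle_I^{-1}\widehat{W}(I)\langle W\rangle_I^{-1}\widehat{W}(I)\langle W\rangle_I^{-1}$. The paper states that testing condition concretely as the matrix inequality~\eqref{eqn:testingnew}, namely $\frac{1}{|J|}\sum_{I\subset J}\widehat{W}(I)\langle W\rangle_I^{-1}\widehat{W}(I)\lesssim [W]_{A_2}^2\langle W\rangle_J$, which is cleaner than your formulation involving an unspecified projection $\mathbf{P}_{I,J}$; you should use the exact form of Theorem~\ref{thm:CETnew} rather than a vague variant. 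Otherwise your outline and the paper's are the same, including the acknowledgment that the scalar proofs (Wittwer, Hukovic--Treil--Volberg) of the testing condition do not survive the passage to matrices.
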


To prove Conjecture \ref{con:SquareEst1}, we would need to control the term $S_3$ from \eqref{eqn:S31} in a more optimal way. 
Our current method of using Theorem \ref{thm:tv} introduces the troublesome $\textnormal{log}\, [W]_{A_2}$ term. An alternate method of controlling
$S_3$ would use a matrix version of the weighted Carleson Embedding Theorem. One would first 
control $S_3$ by
\[
\begin{aligned}
S_3 & =   \sum_{I \in \mathcal{D}} \sum_{k =1}^d \left | \left \langle  \langle W \rangle_I^{-\frac{1}{2}} \widehat{W}(I)  \langle  W \rangle_I^{-1} \langle f \rangle_I,  e^k_I \right \rangle_{\mathbb{C}^d} \right|^2 \\
& \lesssim \sum_{I \in \mathcal{D}}  \left \|  \langle W \rangle_I^{-\frac{1}{2}} \widehat{W}(I)  \langle  W \rangle_I^{-1} \langle f \rangle_I \right \|^2_{\mathbb{C}^d} \\
& =  \sum_{I \in \mathcal{D}}\left \langle  \langle  W \rangle_I^{-1} \widehat{W}(I)    \langle W \rangle_I^{-1} \widehat{W}(I)  \langle  W \rangle_I^{-1} \langle f \rangle_I ,  \langle f \rangle_I  \right \rangle_{\mathbb{C}^d}. \\
\end{aligned}
\]
Conjecture \ref{con:SquareEst1} would follow if we could show
\[ \sum_{I \in \mathcal{D}}\left \langle  \langle  W \rangle_I^{-1} \widehat{W}(I)    \langle W \rangle_I^{-1} \widehat{W}(I)  \langle  W \rangle_I^{-1} \langle f \rangle_I ,  \langle f \rangle_I  \right \rangle_{\mathbb{C}^d}\lesssim [W]_{A_2} \| f\|^2_{L^2(W^{-1})}.\]
To obtain this, we need two things. First, we need a matrix version of the weighted Carleson Embedding Theorem. Very recently, the needed result has actually been proven by Culiuc-Treil in \cite{ct15}, who show the following:

\begin{theorem} \label{thm:CETnew} Let $W$ be a $d \times d$ matrix weight and let  $\{ A_I\}_{I \in \mathcal{D}}$ be a sequence of positive semidefinite $d\times d$ matrices indexed by the dyadic intervals. Then
\[
\sum_{I\in\mathcal{D}} \left\langle A_I \left\langle f\right\rangle_I, \left\langle f\right\rangle_I\right\rangle_{\mathbb{C}^d} \lesssim C\left\Vert f\right\Vert_{L^2(W^{-1})}^2  \qquad \forall f \in L^2(W^{-1})
\]
if and only if 
\[
\frac{1}{\left\vert J\right\vert}\sum_{I\subset J} \left\langle W\right\rangle_I A_I \left\langle W\right\rangle_I\leq C \left\langle W\right\rangle_J \qquad \forall  J \in \mathcal{D}.
\]
\end{theorem}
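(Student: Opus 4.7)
My plan is to prove the two directions separately. Necessity is a direct test-function argument; sufficiency is the substantial content and I would approach it via a corona/stopping-time decomposition.

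For necessity, fix $J\in\mathcal{D}$ and $v\in\mathbb{C}^d$ and plug the test function $f = \mathbf{1}_J W v$ into the assumed embedding inequality. A direct computation gives $\langle f\rangle_I = \langle W\rangle_I v$ for every $I\subset J$ and
\[
\|f\|_{L^2(W^{-1})}^2 = \int_J \langle W^{-1}Wv,Wv\rangle_{\mathbb{C}^d}\,dx = |J|\,\langle \langle W\rangle_J v,v\rangle_{\mathbb{C}^d}.
\]
Using positivity of every term on the left to restrict the sum to $I\subset J$ and letting $v$ range over $\mathbb{C}^d$ yields the stated testing condition.

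For sufficiency, I would first prove the matrix Cauchy-Schwarz bound
\[
\bigl\|\langle W\rangle_I^{-1/2}\langle f\rangle_I\bigr\|^2_{\mathbb{C}^d} \leq \frac{1}{|I|}\int_I \|W^{-1/2}(x)f(x)\|_{\mathbb{C}^d}^2\,dx,
\]
obtained by pairing with arbitrary $u\in\mathbb{C}^d$ and splitting $\langle u,f(x)\rangle = \langle W^{1/2}(x)u, W^{-1/2}(x)f(x)\rangle$. This identifies a scalar quantity $\|\langle W\rangle_I^{-1/2}\langle f\rangle_I\|^2$ that plays the role of a dyadic martingale dominated by the density of $\|f\|_{L^2(W^{-1})}^2$. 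I would then rewrite the embedding's left-hand side as
\[
\sum_I \langle A_I\langle f\rangle_I,\langle f\rangle_I\rangle = \sum_I \langle \langle W\rangle_I A_I\langle W\rangle_I\,u_I,\,u_I\rangle, \qquad u_I := \langle W\rangle_I^{-1}\langle f\rangle_I,
\]
which pairs precisely the matrices appearing in the testing hypothesis against vectors whose norms are controlled by the scalar martingale above.

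To exploit this, I would build a stopping-time family $\mathcal{F}$ starting from a top interval $J_0$: given $F\in\mathcal{F}$, its $\mathcal{F}$-children are the maximal $F'\subsetneq F$ for which either an operator-norm doubling condition on $\langle W\rangle$ fails (e.g., $\|\langle W\rangle_F^{1/2}\langle W\rangle_{F'}^{-1}\langle W\rangle_F^{1/2}\|>2$ or its reverse), or the scalar quantity $\|\langle W\rangle_F^{-1/2}\langle f\rangle_{F'}\|$ exceeds twice its value at $F$. Inside the corona $\mathcal{C}(F)$ the matrix $\langle W\rangle_I$ is comparable to $\langle W\rangle_F$ in the operator ordering, which reduces the local inner sum to a controllable scalar object to which the testing condition $\sum_{I\subset F}\langle W\rangle_I A_I \langle W\rangle_I \leq C|F|\,\langle W\rangle_F$ can be applied. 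Summing over $F\in\mathcal{F}$ and closing with the standard Carleson packing estimate $\sum_{F\in\mathcal{F}}|F|\,\|\langle W\rangle_F^{-1/2}\langle f\rangle_F\|^2 \lesssim \|f\|_{L^2(W^{-1})}^2$, itself a consequence of the pointwise bound above, completes the proof.

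The main obstacle will be making the decomposition behave cleanly in the matrix setting: the scalar notion of ``$\langle W\rangle$ doubles'' has to be replaced by a genuinely operator-norm condition that transfers well to subintervals, and the stopping quantity $\|\langle W\rangle_F^{-1/2}\langle f\rangle_{F'}\|$ is not a true dyadic martingale (because the base point $F$ is frozen within each corona), so its oscillation has to be controlled by hand. A secondary concern is that the matrix Cauchy-Schwarz step risks a dimensional constant reminiscent of the $\log d$ appearing in the unweighted matrix Carleson theorem of Nazarov-Pisier-Treil-Volberg, and obtaining a $d$-independent constant likely requires a more delicate argument than the one sketched here.
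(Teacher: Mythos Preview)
The paper does not prove this theorem at all: it is quoted in Section~\ref{sec:open} as a result of Culiuc--Treil \cite{ct15}, introduced with ``Very recently, the needed result has actually been proven by Culiuc--Treil,'' and no argument is supplied. So there is no in-paper proof to compare your proposal against.

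That said, a few remarks on your sketch. Your necessity argument is the standard and correct one. For sufficiency, the overall architecture (rewrite via $u_I=\langle W\rangle_I^{-1}\langle f\rangle_I$, build a corona decomposition, use the testing condition inside each corona, and close with a Carleson packing estimate) is indeed the shape of the Culiuc--Treil proof. The place where your outline is genuinely incomplete is the step ``reduces the local inner sum to a controllable scalar object to which the testing condition can be applied.'' The testing hypothesis is a \emph{matrix} inequality $\sum_{I\subset F}\langle W\rangle_I A_I\langle W\rangle_I\le C|F|\langle W\rangle_F$, while the quantity you need to bound inside a corona is $\sum_{I\in\mathcal{C}(F)}\langle\langle W\rangle_I A_I\langle W\rangle_I\,u_I,u_I\rangle$ with $u_I$ varying in both length \emph{and direction}. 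Freezing $\langle W\rangle_I\approx\langle W\rangle_F$ handles the matrix weights, but the averages $\langle f\rangle_I$ still move, so you cannot simply test the matrix inequality against a single vector. Culiuc--Treil resolve this with an additional layer of analysis (a Bellman/convexity argument on the matrix side) rather than a purely scalar reduction; your sketch would need a comparable device here, and this is exactly the ``main obstacle'' you flag at the end. Your concern about a $\log d$ loss is also well placed: a naive trace argument at that step would indeed cost a dimensional factor, and avoiding it is part of what makes the Culiuc--Treil result nontrivial.
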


Second, we need the appropriate testing conditions to apply Theorem \ref{thm:CETnew}. Specifically The $A_I$ that appear in our bound for $S_3$ are the matrices
\[ \langle W \rangle_I^{-1} \widehat{W}(I)    \langle W \rangle_I^{-1} \widehat{W}(I)  \langle  W \rangle_I^{-1}.\]
Given Theorem \ref{thm:CETnew}, we need the appropriate testing condition to apply it to $S_3.$
Indeed, we require
\begin{equation} \label{eqn:testingnew}
\frac{1}{\left\vert J\right\vert}\sum_{I\subset J}  \widehat{W}(I)    \langle W \rangle_I^{-1} 
\widehat{W}(I)  \lesssim [W]^2_{A_2} \left \langle W\right\rangle_{J}, \quad \forall J \in \mathcal{D}.
\end{equation}
In the scalar case, this is proved by Wittwer in \cite{wit00} using estimates from Buckley \cite{buc93}. Hukovic-Treil-Volberg give a Bellman function proof in \cite{htv00}. Neither of these arguments adapt well to the matrix setting and currently, it is not clear whether \eqref{eqn:testingnew} is true for matrices.

\subsection{The Hilbert Transform and Haar Multipliers}

If $w$ is a scalar $A_2$ weight, then Theorems \ref{thm:ht} and \ref{thm:HaarMult}  
are true with $[w]_{A_2}$ replacing $[w]^{\frac{3}{2}}_{A_2} \log [w]_{A_2}$.
This motivates the following conjecture:

\begin{conjecture} \label{con:ht} Let $W$ be a  $d \times d$  matrix weight in $A_2$ and let $\{ \sigma_I\}_{I \in \mathcal{D}}$ be a sequence
of matrices satisfying $\| \sigma \|_{\infty} < \infty.$ Then  
\[ 
\begin{aligned}
&\| H   \|_{L^2(W) \rightarrow L^2(W)} \lesssim [W]_{A_2}; \\
 &\| T_{\sigma}   \|_{L^2(W) \rightarrow L^2(W)} \lesssim [W]_{A_2} \left\Vert \sigma\right\Vert_{\infty}.
\end{aligned}
\]
\end{conjecture}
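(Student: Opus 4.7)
The obstruction to the authors' own approach is structural: combining any upper square-function bound $\|S_W f\|^2_{L^2} \lesssim \alpha \|f\|^2_{L^2(W)}$ with any lower bound $\|f\|^2_{L^2(W)} \lesssim \beta \|S_W f\|^2_{L^2}$ and transferring through the dyadic shifts as in Theorem \ref{thm:ht} yields only $\|H\|_{L^2(W)\to L^2(W)} \lesssim (\alpha\beta)^{1/2}$. Even granting Conjecture \ref{con:SquareEst1}, one has $\alpha = [W]_{A_2}^2$, and the natural companion improvement $\beta = [W]_{A_2}$ still produces $[W]_{A_2}^{3/2}$. Any proof of Conjecture \ref{con:ht} must therefore bypass the $S_W$ framework, at least in its final step. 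My proposal is to route the argument through a matrix-valued sparse domination together with a new matrix $A_\infty$ characteristic.

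The plan has two steps. First, I would establish a convex-body sparse domination in the spirit of Nazarov-Petermichl-Treil-Volberg: for $f,g\in L^2$, produce a sparse family $\mathcal{S}$ of dyadic intervals with
\[
|\langle Tf,g\rangle_{L^2}| \lesssim \sum_{Q\in\mathcal{S}} |Q| \, \rho\bigl(\langle f\rangle_Q^{\mathrm{cv}}\bigr)\,\rho\bigl(\langle g\rangle_Q^{\mathrm{cv}}\bigr),
\]
where $\langle f\rangle_Q^{\mathrm{cv}}$ is the convex hull of vector averages of $f$ over subintervals of $Q$ and $\rho$ its Minkowski functional. For $T=H$ this should follow from the corresponding domination of the dyadic shifts $\Sha^{\alpha,r}$ and averaging; for $T=T_\sigma$ it is more direct. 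Second, I would sharpen the sparse bound $\|\mathscr{S}\|_{L^2(W)\to L^2(W)} \lesssim [W]_{A_2}^{3/2}$ of Bickel-Wick and Isralowitz-Kwon-Pott to the conjectured linear dependence. In the scalar setting this is the Hyt\"onen-P\'erez theorem, which interpolates
\[
\|\mathscr{S}\|_{L^2(w)\to L^2(w)} \lesssim [w]_{A_2}^{1/2}\bigl([w]_{A_\infty}+[w^{-1}]_{A_\infty}\bigr)^{1/2}
\]
and uses $[w]_{A_\infty}\le[w]_{A_2}$. The analogue here would require defining a matrix Fujii-Wilson characteristic $[W]_{A_\infty}$ through a Christ-Goldberg-style maximal function applied to the truncations $W\mathbf{1}_Q$, and proving the corresponding operator-valued Carleson embedding, strengthening Theorem \ref{thm:CETnew} by replacing $[W]_{A_2}$ on the right-hand side by $[W]_{A_\infty}$ whenever the coefficient matrices $A_I$ are built from $W$ itself.

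The main obstacle is the matrix $A_\infty$ theory. In the scalar Fujii-Wilson definition, $w(x)$ is a single number and the pointwise estimate $M(w\mathbf{1}_Q)(x) \le c\,[w]_{A_\infty}\langle w\rangle_Q$ makes sense; for matrix $W$ the order is only partial, and one must instead compare convex-body-valued maximal averages against the positive operator $\langle W\rangle_Q$. Proving a dyadic John-Nirenberg self-improvement in this setting is, I believe, the central difficulty. A further obstruction is that most scalar inputs one would like to matrix-ify — Buckley's argument, the Hukovic-Treil-Volberg Bellman proof of the testing condition \eqref{eqn:testingnew}, and Lerner's domination — rely on the implication $0<A<B \Rightarrow A^2 < B^2$, which fails in general. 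Replacing these with operator-theoretic or convex-body-valued analogues that remain quantitatively sharp is where the heart of the problem lies; if this matrix $A_\infty$ framework can be set up, step one of the plan immediately transfers the resulting linear sparse bound to $H$ and $T_\sigma$, proving Conjecture \ref{con:ht}.
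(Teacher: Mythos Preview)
This statement is presented in the paper as a \emph{conjecture}, not a theorem; the paper offers no proof and says explicitly that ``those estimates seem out of reach.'' Your write-up correctly recognizes this, and your diagnosis of the structural ceiling of the $S_W$ method --- that any combination of upper and lower square-function bounds yields at best $[W]_{A_2}^{3/2}$ --- matches the paper's own remarks immediately following Conjecture~\ref{con:ht}.

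What you have written is therefore a research strategy, not a proof, and you are candid about this. The direction you outline --- convex-body sparse domination followed by a matrix Hyt\"onen--P\'erez estimate via a matrix Fujii--Wilson $A_\infty$ characteristic --- is a reasonable and, with hindsight, prescient line of attack. But the decisive step in your plan, upgrading the sparse bound from $[W]_{A_2}^{3/2}$ to $[W]_{A_2}$, is not a lemma on the way to the conjecture: it \emph{is} the conjecture, modulo the sparse-domination reduction. You have traded Conjecture~\ref{con:ht} for the problem of building a quantitatively sharp matrix $A_\infty$ theory, which you yourself identify as blocked by the failure of $0<A<B \Rightarrow A^2<B^2$ and by the absence of a matrix John--Nirenberg self-improvement. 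You name these obstructions accurately but offer no mechanism for overcoming them. So the proposal is an honest statement of where the difficulty lies, and a sensible reformulation, but it contains no new leverage on the conjecture; the gaps you flag are real and remain unfilled.
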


Given our current tools, those estimates seem out of reach. However, if we could prove
Conjecture \ref{con:SquareEst1} by establishing a bound of $[W]_{A_2}$ in Theorem \ref{thm:SquareEst2},
then the arguments from the proofs of Theorems \ref{thm:ht} and \ref{thm:HaarMult} would immediately imply that 
\[ 
\begin{aligned}
&\| H   \|_{L^2(W) \rightarrow L^2(W)} \lesssim   [W]_{A_2}^{\frac{3}{2}}; \\
&\| T_{\sigma}   \|_{L^2(W) \rightarrow L^2(W)} \lesssim  [W]_{A_2}^{\frac{3}{2}} \left\Vert \sigma\right\Vert_{\infty}.
\end{aligned}
 \]

\subsection{Extrapolation} In the scalar case, after Hyt\"onen established
\[ \| T \|_{L^2(w) \rightarrow L^2(w)} \lesssim [w]_{A_2},\]
for all Calder\'on-Zgymund operators $T$, he used the sharp form of Rubio de Francia's extrapolation theorem due to Dragi\u{c}evi\'c-Grafakos-Pereyra-Petermichl \cite{dgpp05} to obtain the following 
\[ \| T\|_{L^p(w) \rightarrow L^p(w)} \lesssim [w]_{A_p}^{ \max \{1, \frac{1}{p-1} \}} \qquad 1< p < \infty,\]
which is sharp for all exponents. For the dyadic square function, one can also extrapolate weighted $L^p$ bounds from the weighted $L^2$ bounds, but the estimates are only sharp for $1< p \le 2.$ These extrapolation results rely heavily on maximal functions; Rubio de Francia's original extrapolation theorem \cite{ru84} used the connections between the maximal function and $A_p$ weights. Similarly, the sharp theorem in \cite{dgpp05} used the sharp dependence of the maximal function's norm on $[w]_{A_p}$:
\[ \| M \|_{L^p(w) \rightarrow L^p(w)} \lesssim [w]_{A_p}^{\frac{p'}{p}} \qquad 1< p <\infty,\]
first proved by Buckley in \cite{buc93b}. Then, a natural question is: 
\begin{center} Can one use extrapolation and the operator bounds from Theorems \ref{thm:SquareEst1}, \ref{thm:ht}, and \ref{thm:HaarMult} to deduce similar bounds for operators related to $A_p$ weights, $1<p<\infty?$
\end{center}
This is an interesting question certainly worth exploring. However, there are several complications stemming from maximal functions in the vector-valued setting, which make the question difficult. First, a problem arises when defining maximal operators in the vector case. For example, one could define $Mf(x)$ to be the average of $f$ over an interval containing $x$ with largest vector magnitude. However, this ignores the fact that the effect of a matrix weight $W$ will depend both on direction and magnitude. Instead,  in \cite{cg01, gol03}, Christ-Goldberg and Goldberg studied weighted, vector analogues of the maximal function and defined a different operator $M^p_W$ for each weight $W$ and $1<p<\infty$. For the exact formulas, see  \eqref{eqn:max} in Section \ref{sec:square}.
The boundedness of these maximal operators is closely related to the weight $W$ belonging to a specific $A_p$ class. However, proving related extrapolation results seems difficult because we now have a  family of maximal operators that rely on both the weight $W$ and the exponent $p$. Furthermore, although Isralowitz-Kwon-Pott in \cite{IKP} established the nice sharp bound
\[ \| M^2_W  \|_{L^2(\mathbb{R}, \mathbb{C}^d) \rightarrow L^2(\mathbb{R}, \mathbb{R})} \lesssim [W]_{A_2},\]
the sharp bounds for $p \ne 2$ are currently unknown. It is worth noting that Isralowitz-Kwon-Pott indicate in \cite{IKP}  that similar bounds for $p\ne 2$ will be established in \cite{im15}.

\end{document}